\newtheorem{theorem}{Theorem}
\newtheorem{proposition}{Proposition}
\newtheorem{lemma}{Lemma}
\newtheorem{corollary}{Corollary}
\newtheorem{remark}{Remark}
\numberwithin{equation}{section}
\newcommand{\pdt}[0]{\frac{\partial}{\partial t}}
\newcommand{\Rc}[0]{\mathrm{Ric}}
\newcommand{\Rm}[0]{\mathrm{Rm}}
\begin{document}
\title{A local curvature estimate for the Ricci flow}

\author{Brett Kotschwar}
\email{kotschwar@asu.edu}
\address{School of Mathematical and Statistical Sciences,
	Arizona State University, Tempe, AZ 85287, USA}
\author{Ovidiu Munteanu}
\email{ovidiu.munteanu@uconn.edu}
\address{Department of Mathematics, University of Connecticut, Storrs, CT
	06268, USA}
\author{Jiaping Wang}
\email{jiaping@math.umn.edu}
\address{School of Mathematics, University of Minnesota, Minneapolis, MN
	55455, USA}

\thanks{The first author was partially supported by Simons Foundation grant \#359335. The second author was partially supported by NSF grant DMS-1506220.}

\date{}

\begin{abstract}
We show that the norm of the Riemann curvature tensor
of any smooth solution to the Ricci flow can be explicitly estimated
in terms of its initial values on a given ball, a local uniform bound on the Ricci tensor, and the elapsed time.
This provides a new, direct proof of a result of \v{S}es\v{u}m, which asserts that the curvature of a solution
on a compact manifold
cannot blow up while the Ricci curvature remains bounded, and extends its conclusions to the noncompact setting.
We also prove that the Ricci curvature must blow up at least linearly
along a subsequence at a finite time singularity.
\end{abstract}

\maketitle
\section{Introduction}
Let $M$ be a smooth $n$-dimensional manifold and $g(t)$ a solution to the Ricci flow
\begin{equation}
\label{eq:rf}
\pdt g = -2\Rc(g)
\end{equation}
on $M$ defined on a maximal interval $[0, T)$ with $0 < T \leq \infty$. When $M$ is compact,
Hamilton's long-time existence criterion \cite{Hamilton3D}
asserts that either $T=\infty$ or the the maximum of the norm of the Riemann curvature tensor blows up at $T$, that is, $\lim_{t\nearrow T} \sup_{M}|\Rm|_{g(t)} = \infty$. In other words, provided the sectional
curvatures of a solution
are uniformly bounded on a finite interval $[0, T)$, the solution may be extended to a larger interval $[0, T+\epsilon)$.  

The arrival of a finite-time singularity for a solution on a compact manifold is therefore characterized by the blow-up of $|\Rm|$.  
It is of considerable interest to try to express this criterion in terms of a quantity simpler than the norm of the full curvature tensor.
One of the first improvements in this direction was made by \v{S}es\v{u}m \cite{Sesum}, who proved that, in the above situation, if $T < \infty$,
then $\limsup_{t\nearrow T}\sup_{M}|\Rc|= \infty$. Her result has since been generalized in a number of directions.
It has been conjectured, in fact, that the scalar curvature must also blow up in this case, that is, one must actually have $\limsup_{t\nearrow T}\sup_M R = \infty$.
In dimension three, this is a consequence of the Hamilton-Ivey estimate \cite{HamiltonSingularities, Ivey}, 
and it is true for all K\"ahler solutions by a theorem of Zhang \cite{Zhang}. Recent efforts have made considerable progress toward
resolving this conjecture and clarifying the relationship between scalar curvature and singularity formation.
See, for example, \cite{BamlerZhang}, \cite{CaoTran}, \cite{EndersMuellerTopping}, \cite{He}, \cite{Knopf}, \cite{LeSesum},
\cite{Simon1, Simon2}, \cite{Wang1, Wang2}, and the references therein.

On noncompact $M$, the loss of a uniform curvature bound is no longer necessarily coincident with the arrival of a singularity.
There are by now many constructions of smooth, complete solutions whose curvature is unbounded on every time-slice,
and Giesen and Topping \cite{GiesenTopping} and Cabezas-Rivas and Wilking \cite{CabezasRivasWilking} 
have separately constructed examples of solutions $g(t)$ which are defined smoothly for $t\in [0, T)$ and
possess a uniform curvature bound on $[0, T_0]$, but which have unbounded curvature on $[T_1, T)$ for some $T_0 < T_1$.
It is possible, even, that there exist solutions for which $\sup_M|\Rm|_{g(0)}(x, 0) < \infty$ but $\sup_{M\times[0, \epsilon)}|\Rm|_{g(t)} = \infty$ for \emph{all} $\epsilon > 0$, although there are currently
no complete examples known. A recent result of Topping \cite{ToppingUniqueness} implies that they do not occur in two dimensions.
The preservation of a uniform curvature bound
is closely related to the uniqueness of Shi's solutions
\cite{Shi}, which, in general dimensions, is only presently known to hold within the class of complete 
solutions of bounded curvature (\cite{ChenZhu}; cf. \cite{Kotschwar}, \cite{ToppingUniqueness}). 

Hamilton's long-time existence criterion nevertheless has a partial analog for noncompact $M$: it is still true
that if 
$g(t)$ is a complete solution on $M\times[0, T)$ with $0 < T < \infty$ and $\sup_{M\times [0, T)}|\Rm|_{g(t)} < \infty$, then $g(t)$ can be extended smoothly to a solution
on $[0, T+\epsilon)$. Here, through the fundamental estimates of Bando \cite{Bando} and Shi \cite{Shi},
one can parlay the uniform curvature bound into instantaneous uniform bounds on all derivatives of the curvature.
With these, one can show that $g(t)$ converges smoothly to a complete limit metric
$g(T)$ of bounded curvature, and, from there, use the short-time existence result of Shi \cite{Shi} to restart the flow.

As before, it is desirable to express this criterion in terms of a simpler object than the full curvature tensor. 
The first question to ask is whether an analog of \v{S}es\v{u}m's theorem is valid. As we have noted, we must now contend with the new possibility
that the Riemann curvature tensor may become unbounded instantaneously. This is an obstacle to directly adapting the argument-by-contradiction in \cite{Sesum},
which relies on the extraction of a limit of a sequence of solutions rescaled by factors comparable to the spatial 
maximum of curvature. (See, however, \cite{MaCheng} for an approach along these lines.)
Recently, it was proven in \cite{Kotschwar} that, if the curvature of a smooth complete solution is initially bounded, and
if the Ricci curvature is uniformly bounded along the flow, then the Riemann curvature tensor must also
remain uniformly bounded for a short time. However, this result is obtained indirectly, as a consequence of a uniqueness theorem,
and the length of the interval on which the curvature of the solution is guaranteed to remain bounded is nonexplicit.

The statements of the short-time curvature bound in \cite{Kotschwar} and the long-time existence criterion in \cite{Sesum}
raise the question of whether it is possible to simply estimate the growth of the curvature tensor locally and explicitly in terms of the Ricci curvature,
and thereby simultaneously obtain effective proofs of both of these results. Such estimates have some precedent for the Ricci flow: in \cite{MW}, it is shown
that, on a gradient shrinking soliton, a bound on $\Rc$ implies a polynomial
growth bound on $\Rm$. In the present paper, we show that the integral estimates in \cite{MW} which are the basis of these bounds
can be adapted to general smooth solutions to the Ricci flow. Our main result is the following local estimate.

\begin{theorem}
\label{Rm}Let $\left( M^n,g\left( t\right) \right) $ be a smooth solution to the
Ricci flow defined for $0\leq t\leq T$. Suppose that there exist
constants $A$, $K>0$ and a point $x_0\in M$ such that the ball $B_{g(0)}(x_0, A/\sqrt{K})$ is compactly contained in $M$
and
\begin{equation*}
\left\vert \mathrm{Ric}\right\vert\leq K\quad\mbox{ on }\
B_{g\left( 0\right) }\left( x_{0},\frac{A}{\sqrt{K}}\right)\times [0, T] .
\end{equation*}%
Then there are constants $c$, $\alpha$, $\beta >0$, depending only on the dimension $n$, such that 
\begin{equation*}
\left\vert \mathrm{Rm}\right\vert\left( x_{0}, T\right)
\leq ce^{c\left( KT+A\right) }\left( 1+\left( \frac{\Lambda _{0}}{K}\right) ^{\alpha }+\left( \frac{1}{KT}+A^{-2}\right)
^{\beta }\right) \left(
K\left( 1+A^{-2}\right) +\Lambda _{0}\right) ,
\end{equation*}%
where 
\begin{equation*}
\Lambda _{0}:=\sup_{B_{g\left( 0\right) }\left( x_{0},\frac{A}{\sqrt{K}}%
\right) }\left\vert \mathrm{Rm}\right\vert(x, 0).
\end{equation*}
\end{theorem}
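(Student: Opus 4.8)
The plan is to combine the weighted integral estimates of \cite{MW} with a local parabolic iteration, carried out relative to the fixed background metric $g(0)$. After the rescaling $g(t)\mapsto Kg(t/K)$ we may assume $K=1$, so that $|\Rc|\le 1$ on $B_{g(0)}(x_0,A)\times[0,T]$ for the rescaled $T$, and the goal becomes an estimate for $|\Rm|(x_0,T)$ in terms of $n$, $A$, $T$, and $\Lambda_0$. The first step is to record the geometric consequences of the Ricci bound. Since $\pdt g=-2\Rc$, the metrics satisfy $e^{-2t}g(0)\le g(t)\le e^{2t}g(0)$, so the volume forms and the operators $\Delta_{g(t)}$, $\nabla_{g(t)}$ are uniformly comparable to their $g(0)$ counterparts on the ball up to factors $e^{ct}$, and $|R|\le n$; moreover $\Rc_{g(0)}\ge -1$, so by Bishop--Gromov the metric $g(0)$ is volume doubling and satisfies Neumann--Poincar\'e, hence scale-invariant Sobolev, inequalities at all scales $\le A$ on $B_{g(0)}(x_0,A)$, with constants of the form $e^{cA}$. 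These are the only geometric inputs; in particular no non-collapsing hypothesis is needed, because the iteration below is run in scale-invariant (volume-normalized) form.

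Next, recall the evolution of the curvature. Using the Uhlenbeck trick, $(\pdt-\Delta)\Rm=\Rm\ast\Rm$, whence, in the barrier sense,
\begin{equation*}
(\pdt-\Delta)|\Rm|^{2}\le -2|\nabla\Rm|^{2}+c_n|\Rm|^{3},\qquad (\pdt-\Delta)|\Rm|\le c_n|\Rm|^{2}.
\end{equation*}
The sole obstruction to a direct maximum-principle or Moser argument is the cubic reaction term $|\Rm|^{3}$, which cannot be controlled by the Ricci bound pointwise. The second step --- the heart of the argument, and the place where the structure exploited in \cite{MW} is essential --- is to obtain an a priori $L^{q}$ bound on $\Rm$ over a parabolic cylinder $Q=B_{g(0)}(x_0,r)\times[0,T]$ for some fixed $q>(n+2)/2$, by an iteration over increasing exponents. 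Testing the inequality for $|\Rm|^{2}$ against $|\Rm|^{2p-2}\phi^{2}$ for a cutoff $\phi$ vanishing on $\partial B_{g(0)}(x_0,r)$ (but not at $t=0$), and integrating by parts --- using $|\partial_t\,dV_{g(t)}|\le n\,dV_{g(t)}$ --- gives a differential-energy inequality of the schematic form
\begin{equation*}
\sup_{t}\int|\Rm|^{2p}\phi^{2}+\int\!\!\int_{Q}|\nabla(|\Rm|^{p})|^{2}\phi^{2}\ \le\ C(p)\int\!\!\int_{Q}|\Rm|^{2p+1}\phi^{2}\ +\ C(p)\Big(\int_{B}|\Rm|^{2p}(0)\phi^{2}+(\nabla\phi,\partial_t\phi)\text{-errors}\Big),
\end{equation*}
so everything hinges on absorbing $\int\!\!\int_{Q}|\Rm|^{2p+1}\phi^{2}=\int\!\!\int_{Q}(|\Rm|^{p}\phi)^{2}|\Rm|$. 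This we estimate by H\"older and the Sobolev inequality above, $\int\!\!\int_{Q}(|\Rm|^{p}\phi)^{2}|\Rm|\le\|\Rm\|_{L^{q}(Q)}\,\|\,|\Rm|^{p}\phi\,\|_{L^{2\sigma}}^{2}$ with $2\sigma$ the parabolic Sobolev exponent, the last factor being absorbed into the left-hand side \emph{provided} $\|\Rm\|_{L^{q}(Q)}$ is itself under control. The Ricci bound re-enters here through the contracted second Bianchi identity $\mathrm{div}\,\Rm=\nabla\Rc$: together with the Weitzenb\"ock formula for $\Rm$ and the evolution $(\pdt-\Delta)|\Rc|^{2}\le -2|\nabla\Rc|^{2}+c_n(|\Rm|+1)$, which (since $|\Rc|\le1$) yields a bound on $\int\!\!\int_{Q}|\nabla\Rc|^{2}$ in terms of lower-order curvature, one trades a factor of $\Rm$ for $\nabla\Rc$ and thereby renders the cubic term effectively subcritical; a careful bookkeeping of the resulting coupled inequalities closes the chain at the base exponent $p=1$, giving a volume-normalized bound on $\|\Rm\|_{L^{2}(Q)}$ controlled by $\Lambda_0$, $A$, $T$, and then --- iterating the displayed estimate a \emph{finite} number of times --- a bound on $\|\Rm\|_{L^{q}(Q)}$ for the desired $q>(n+2)/2$.

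With such a bound in hand, the potential $V:=c_n|\Rm|$ in $(\pdt-\Delta)|\Rm|\le V|\Rm|$ lies in $L^{q}$ with $q>(n+2)/2$, so local parabolic De Giorgi--Nash--Moser iteration --- using the volume doubling and Poincar\'e inequalities for $g(0)$ and treating the $e^{ct}$-bounded time-variation of the metric and measure as a perturbation --- produces
\begin{equation*}
|\Rm|(x_0,T)\ \le\ C\Big(n,A,\|V\|_{L^{q}(Q)}\Big)\,\Big(|Q|^{-1}\!\int\!\!\int_{Q}|\Rm|^{2}\Big)^{1/2}.
\end{equation*}
Inserting the $L^{2}$ and $L^{q}$ bounds from the previous step, tracking how the radius $r$ and the elapsed time enter through the cutoff gradients and through the number of iteration steps (as $A^{-2}$ and $(KT)^{-1}$ after undoing the scaling), how the doubling and Sobolev constants contribute the factor $e^{cA}$ and the metric distortion the factor $e^{cKT}$, and how $\Lambda_0/K$ appears as the dimensionless initial curvature, and finally restoring $K$ by the inverse rescaling, yields the stated inequality. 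The main difficulty --- the only genuinely nontrivial point --- is the second step: closing the coupled integral iteration for $\Rm$ with nothing but a Ricci bound and the initial Riemann bound available, which is precisely what the estimates of \cite{MW} make possible.
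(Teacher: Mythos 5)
Your rescaling to $K=1$, the use of the Ricci bound to establish uniform metric equivalence, Bishop--Gromov for the Sobolev constant, and the final De Giorgi--Nash--Moser step using the inequality $(\partial_t-\Delta)|\Rm|\leq c|\Rm|^2$ with the potential $|\Rm|$ controlled in $L^q$, $q>(n+2)/2$, all match the paper. The difficulty is in your second step, and I do not think it closes.

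Your energy inequality, tested against $|\Rm|^{2p-2}\phi^2$, retains the full cubic reaction term $\iint_Q |\Rm|^{2p+1}\phi^2$, and you propose to absorb it via H\"older and Sobolev at the cost of a factor $\|\Rm\|_{L^q(Q)}$, $q>(n+2)/2$. But this is precisely the quantity you are trying to estimate: the iteration you describe, starting from $p=1$ and climbing a finite number of steps to reach $L^q$, needs $\|\Rm\|_{L^q(Q)}$ to be finite and controlled \emph{at every step}, including the first, so the bootstrap is circular. It also fails already at the base: the $p=1$ energy estimate contains $\iint_Q|\Rm|^3\phi^2$, which a Ricci bound and $\Lambda_0$ alone do not control, and your appeal to the Bianchi identity and the evolution of $|\Rc|^2$ to ``trade a factor of $\Rm$ for $\nabla\Rc$'' is stated as an aspiration, not a computation. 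There is no threshold or smallness assumption anywhere in the theorem, so an argument that tries to absorb the cubic term by $\|\Rm\|_{L^q}$ cannot be made to work as written.

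The paper avoids the cubic term entirely, and this is the real mechanism you are missing. Rather than use $(\partial_t-\Delta)|\Rm|^2\leq -2|\nabla\Rm|^2 + c|\Rm|^3$ as the starting point, it uses the identity (Lemma~\ref{L}, equation \eqref{eq:rmev}) expressing $\partial_t \Rm$ \emph{purely} in terms of second covariant derivatives of $\Rc$: schematically $\partial_t\Rm = \nabla^2\Rc$. Differentiating $\int|\Rm|^p\phi^{2p}$ with this and integrating by parts produces only terms of the form $\nabla\Rc\ast\nabla\Rm$ and $\nabla\Rc\ast\Rm\ast\nabla\phi$. These are then absorbed not by a Sobolev inequality but by augmenting the energy with the auxiliary quantities $\tfrac{1}{K}\int|\Rc|^2|\Rm|^{p-1}\phi^{2p}$ and $c_1K\int|\Rm|^{p-1}\phi^{2p}$, whose time derivatives (computed via the Bochner formulas \eqref{eq:rcnormev} and \eqref{eq:rmnormev}) generate the matching negative terms $-\tfrac{1}{K}\int|\nabla\Rc|^2|\Rm|^{p-1}\phi^{2p}$ and $-K\int|\nabla\Rm|^2|\Rm|^{p-3}\phi^{2p}$. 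The upshot (Proposition~\ref{A}) is that the combined functional $U(t)$ satisfies a \emph{linear} ODE inequality $U'\leq cKU + cK\int|\Rm|^{p-1}|\nabla\phi|^2\phi^{2p-2}$, with no remaining cubic nonlinearity, which integrates directly to the $L^p$ bound of Proposition~\ref{Int} for any $p\geq 3$ and with no bootstrap. That is the step your proposal needs to supply. (Your reference to $\operatorname{div}\Rm$ and a Weitzenb\"ock-type identity is in the spirit of the paper's Remark~\ref{rem:p2} treatment of $p=2$, but the main argument for $p\geq 3$ proceeds as above.)
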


This estimate gives a new, direct, proof of \v{S}es\v{u}m's theorem in the compact case,
and extends its conclusion to solutions on noncompact manifolds.
\begin{corollary}\label{cor:rcbound}
 Suppose that $(M^n, g(t))$ is a smooth solution to the Ricci flow defined for $0\leq t < T < \infty$,
 satisfying
 \[
 \quad K := \sup_{M\times [0, T)}|\Rc|(x, t) < \infty. 
\]
Then
\begin{enumerate}
 \item[(a)] There is a smooth metric $\bar{g}$ on $M$, uniformly equivalent to $g(0)$, 
 to which $g(t)$ converges, locally smoothly, as $t\nearrow T$.
 
 \item[(b)] If, in addition, $(M, g(0))$ is complete and
 \[
  \Lambda := \sup_M|\Rm|(x, 0) < \infty,
 \]
then
\[
\sup_{M\times[0, T)}|\Rm|(x, t) < \infty,
\]
 and $g(t)$ extends smoothly to a complete solution
on $[0, T+\epsilon)$ for some $\epsilon > 0$ depending on $n$, $K$, $T$, and $\Lambda$.
\end{enumerate}
\end{corollary}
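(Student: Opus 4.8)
The plan is to obtain both parts of the corollary from Theorem~\ref{Rm} together with standard features of the Ricci flow equation. The uniform bound $|\Rc|\le K$ on $M\times[0,T)$ gives, through $\pdt g=-2\Rc$, the pointwise inequality $-2Kg(t)\le\pdt g(t)\le 2Kg(t)$, hence $e^{-2Kt}g(0)\le g(t)\le e^{2Kt}g(0)$; since $T<\infty$, the metrics $g(t)$, $0\le t<T$, are uniformly equivalent to $g(0)$. Writing $g(s')-g(s)=-2\int_s^{s'}\Rc(g(t))\,dt$ and reading it off in a fixed chart, the same bounds show $\{g(t)\}$ is Cauchy in $C^0_{\mathrm{loc}}$ as $t\nearrow T$, with limit a continuous metric $\bar g$ satisfying $e^{-2KT}g(0)\le\bar g\le e^{2KT}g(0)$. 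What remains is to upgrade this to $C^\infty_{\mathrm{loc}}$ convergence in (a), and to turn the local curvature control afforded by Theorem~\ref{Rm} into a global one in (b).

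For (a), fix a compact $\Omega\Subset M$. Since the metric and manifold topologies coincide, there is $r_0>0$ with $B_{g(0)}(y,r_0)$ compactly contained in $M$ for every $y\in\Omega$ and $\Lambda_\Omega:=\sup_{y\in\Omega}\sup_{B_{g(0)}(y,r_0)}|\Rm|(\cdot,0)<\infty$. Applying Theorem~\ref{Rm} with $A=\sqrt{K}\,r_0$ to the solution restricted to $[0,t]$, for each $y\in\Omega$ and $t\in[\delta,T)$, and using $T<\infty$ to absorb $e^{c(Kt+A)}\le e^{c(KT+A)}$ and $(Kt)^{-1}\le(K\delta)^{-1}$, yields $|\Rm|\le C(\Omega,\delta)$ on $\Omega\times[\delta,T)$. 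Shi's local derivative estimates \cite{Shi} then give $|\nabla^k\Rm|\le C_k(\Omega,\delta)$ on a slightly smaller space-time region; combined with the metric equivalence, these produce uniform $C^\infty$ bounds on $g(t)$, and hence (via $\pdt g=-2\Rc$) on $\pdt g(t)$, in fixed local coordinates. So $\{g(t)\}_{t\in[\delta,T)}$ is precompact in $C^\infty_{\mathrm{loc}}$; since its $C^0$ limit is the previously identified $\bar g$, the whole family converges to $\bar g$ in $C^\infty_{\mathrm{loc}}$, and $\bar g$ is smooth. As $\Omega$ was arbitrary, $\bar g$ is a smooth metric on $M$, uniformly equivalent to $g(0)$.

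For (b), completeness of $g(0)$ lets us, by Hopf--Rinow, invoke Theorem~\ref{Rm} at every $x_0\in M$ with a fixed radius — say $A=1$ — since $\overline{B_{g(0)}(x_0,1/\sqrt K)}$ is then automatically compact, and moreover $\Lambda_0\le\Lambda$ for all such balls. Applied on $[0,t]$, this gives, for any fixed $\delta\in(0,T)$ and all $t\in[\delta,T)$, a bound $|\Rm|(x_0,t)\le C(n,K,T,\Lambda,\delta)$ independent of $x_0$, whence $\sup_{M\times[\delta,T)}|\Rm|<\infty$. The one remaining point — and the place I expect the real difficulty to lie — is control of $|\Rm|$ on $M\times[0,\delta]$: the estimate of Theorem~\ref{Rm} degenerates like $t^{-\beta}$ as $t\to 0$, so it is silent near the initial time, and on a noncompact manifold one cannot a priori rule out an instantaneous loss of the curvature bound. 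Here one uses instead that $(M,g(0))$ is complete with $|\Rm|(\cdot,0)\le\Lambda$: a standard doubling-time estimate (equivalently, the short-time bound of \cite{Kotschwar}) yields $\tau=\tau(n,\Lambda)>0$ with $\sup_{M\times[0,\min(\tau,T))}|\Rm|\le 2\Lambda$, and taking $\delta=\min(\tau,T/2)$ and combining with the previous paragraph gives $\sup_{M\times[0,T)}|\Rm|<\infty$. Finally, applying Shi's estimates on $[T/2,T)$, where the curvature is now globally bounded by some $C_*=C_*(n,K,T,\Lambda)$, shows $g(t)$ converges smoothly and globally as $t\nearrow T$ to a complete metric $g(T)$ with $|\Rm|_{g(T)}\le C_*$; Shi's short-time existence theorem then restarts the flow on $[T,T+\epsilon)$ with $\epsilon=\epsilon(n,C_*)=\epsilon(n,K,T,\Lambda)$, and gluing produces the asserted smooth extension. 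The substance of the argument thus sits in Theorem~\ref{Rm} and in the soft fact that complete initial data of bounded curvature keeps a curvature bound for a definite time; the novelty is that the theorem lets one propagate such a bound across the whole interval once $t$ is bounded away from $0$.
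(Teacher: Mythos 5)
Your argument for part (a) is sound and matches the paper's intent: apply Theorem~\ref{Rm} on a compact exhaustion to obtain interior curvature bounds away from $t=0$, invoke Shi's local derivative estimates for higher-order control, and pass to the $C^\infty_{\mathrm{loc}}$ limit, which must coincide with the $C^0$ limit already extracted from the integral formula for $g(t)$.

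For part (b), you have correctly isolated the genuinely delicate point: the bound of Theorem~\ref{Rm} degenerates like $t^{-\beta}$ as $t\to 0$, so applying it on $[0,t]$ only gives $\sup_M|\Rm|(\cdot,t)\le F(t)$ with $F(t)\to\infty$ as $t\to 0$, and the uniform bound on $M\times[0,T)$ does not follow from that alone. However, your proposed repair is circular. The doubling-time estimate is an \emph{a priori} estimate: to derive $\Phi'(t)\le c\,\Phi(t)^2$ for $\Phi(t)=\sup_M|\Rm|(\cdot,t)$ via the maximum principle on a complete noncompact manifold one must already know $\sup_{M\times[0,\tau]}|\Rm|<\infty$ for the relevant $\tau$, which is precisely the thing to be proved. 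Knowing $\Phi(t)<\infty$ for each individual $t>0$ (which Theorem~\ref{Rm} does give) is not enough, since $\sup_{t\in(0,\tau]}F(t)=\infty$. In particular, your parenthetical ``(equivalently, the short-time bound of \cite{Kotschwar})'' is not correct: the Kotschwar result is strictly stronger than the doubling-time estimate because it \emph{proves} the uniform bound rather than assuming it, and this is exactly why it is a nontrivial theorem (the paper emphasizes, in the introduction, that no instantaneous blow-up is \emph{not} known in general for complete solutions of initially bounded curvature). Describing it as a ``soft fact'' gets the logic backwards. Invoking \cite{Kotschwar} does close the gap, but it imports the very result this corollary is meant to reprove explicitly, which undercuts the purpose. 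The fix consistent with the paper's method is to remove the $t^{-\beta}$ factor from the pointwise estimate by feeding the initial bound $\Lambda_0$ into the Moser iteration in the proof of Theorem~\ref{Rm} (for instance, by iterating on $(|\Rm|-\Lambda_0)_+$, which vanishes at $t=0$, so the parabolic cylinders need not be shrunk at the bottom); Proposition~\ref{Int} already gives the needed $L^p$ bound uniformly down to $t=0$, so this variant costs nothing extra.
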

Part (a) may be proven by an argument analogous to that for compact solutions, by considering the convergence on compact sets.
See Section 14 of \cite{Hamilton3D} and Section 6.7 of \cite{CK}. This demonstrates that, on an \emph{arbitrary} smooth solution,
the curvature tensor cannot blow up at a fixed point so long as the Ricci tensor remains bounded in a surrounding neighborhood.
The uniform bound in (b) follows directly from 
the application of Theorem \ref{Rm} to balls of fixed radius.  It asserts, in particular, that the curvature tensor
cannot instantaneously become unbounded in space so long as the solution continues to move with bounded speed.
This bound is also Theorem 1.4 in the paper \cite{MaCheng}.
The proof there (which is based on a blow-up argument), however, makes use of an
 implicit assumption that the curvature tensor remains bounded for a short time \cite{Cheng}.
 
There are a variety of other applications which one may obtain from the implication that an initial bound on $|\Rm|$
and a uniform bound on $|\Rc|$ guarantees a uniform bound on $|\Rm|$. For example, it follows from Theorem 18.2 in \cite{HamiltonSingularities},
that if a complete solution $(M, g(t))$ has the property that $|\Rm|(x, 0)\to 0$ as $x\to\infty$, then it continues to do so as long as $|\mathrm{Ric}|$ remains
bounded. Our estimates also immediately imply an improvement of the dependencies of the constants in Shi's derivative estimates \cite{Shi}.
\begin{corollary}\label{cor:bbs}
  Let $(M, g(t))$ be a smooth solution to Ricci flow defined for $0\leq t \leq T$.  Suppose that 
  $B_{g(0)}(x_0, A)$ is compactly contained in $M$ for some $A > 0$ and $x_0 \in M$ and let
  \[
      \Lambda := \sup_{B_{g(0)}(x_0, A)}|\Rm|(x, 0), \quad K := \sup_{B_{g(0)}(x_0, A)\times [0, T]}|\Rc|(x, t).
  \]
Then, for all $m = 0, 1, 2, \ldots$,
\[
    \sup_{B_{g(0)}(x_0, \frac{A}{2})\times[0, T]}t^{m/2}|\nabla^{(m)}\Rm|(x, t)\leq C(m, n, A, K, T,  \Lambda).
\]
\end{corollary}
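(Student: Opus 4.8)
The strategy is to reduce Corollary \ref{cor:bbs} to a single statement about the curvature tensor alone, namely a uniform bound
\[
|\Rm| \le \Lambda_0 \quad \text{on } B_{g(0)}(x_0, 3A/4) \times [0, T], \qquad \Lambda_0 = \Lambda_0(n, A, K, T, \Lambda),
\]
from which the asserted derivative bounds follow by the interior estimates of Bando and Shi in their standard localized form. Indeed, applying the latter with initial time $0$ on balls of radius $A/4$ about the points of $B_{g(0)}(x_0, A/2)$ (each contained in $B_{g(0)}(x_0, 3A/4)$, where the curvature bound holds) produces constants $C(m, n)$ with
\[
|\nabla^{(m)}\Rm|(x, t) \le C(m, n)\,\Lambda_0\bigl(A^{-2} + t^{-1} + \Lambda_0\bigr)^{m/2} \quad \text{on } B_{g(0)}(x_0, A/2) \times (0, T],
\]
hence $t^{m/2}|\nabla^{(m)}\Rm| \le C(m, n)\,\Lambda_0(TA^{-2} + 1 + T\Lambda_0)^{m/2}$ there; at $t = 0$ there is nothing to check, since the weight $t^{m/2}$ annihilates the terms with $m \ge 1$, while for $m = 0$ the quantity is $|\Rm|(x, 0) \le \Lambda$. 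So the corollary follows once $\Lambda_0$ has been produced, and the rest of the argument aims at this.

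For positive times this is precisely what Theorem \ref{Rm} supplies. Let $\sigma = \sigma(n, A, K, \Lambda) > 0$ be the threshold furnished in the next paragraph, set $\tau := \min\{T, \sigma\}$, and fix $t \in [\tau, T]$ and $x_1 \in B_{g(0)}(x_0, 3A/4)$. Applying Theorem \ref{Rm} to the restriction of the flow to $[0, t]$, based at $x_1$, with the parameter of the theorem taken to be $\sqrt{K}\,A/4$ --- so that the ball required by its hypothesis is $B_{g(0)}(x_1, A/4)$, which is compactly contained in $M$ (its closure lying inside the compact set $\overline{B_{g(0)}(x_0, A)}$) and on which $|\Rc| \le K$ --- and using that $\sup_{B_{g(0)}(x_1, A/4)}|\Rm|(\cdot, 0) \le \Lambda$, we obtain
\[
|\Rm|(x_1, t) \le c\,e^{c(Kt + \sqrt{K}A/4)}\Bigl(1 + \bigl(\tfrac{\Lambda}{K}\bigr)^{\alpha} + \bigl(\tfrac{1}{Kt} + \tfrac{16}{KA^2}\bigr)^{\beta}\Bigr)\bigl(K + 16A^{-2} + \Lambda\bigr).
\]
Since $t \ge \tau$, the factor containing $1/(Kt)$ is bounded in terms of $n, A, K, T, \Lambda$ alone, so the right-hand side is at most some $\Lambda_0' = \Lambda_0'(n, A, K, T, \Lambda)$; taking the supremum over $x_1$ and $t$ gives $|\Rm| \le \Lambda_0'$ on $B_{g(0)}(x_0, 3A/4) \times [\tau, T]$.

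What remains --- and what I expect to be the main obstacle --- is to bound $|\Rm|$ on the complementary slab $B_{g(0)}(x_0, 3A/4) \times [0, \tau]$. Theorem \ref{Rm} is of no direct help, since its bound diverges as $t \searrow 0$ and never registers the initial bound $\Lambda$; a genuinely different, short-time argument is needed. The natural one is a localized maximum principle based on Hamilton's evolution inequality $\pdt|\Rm|^{2} \le \Delta|\Rm|^{2} + c(n)|\Rm|^{3}$ --- which, crucially, involves no covariant derivative of the Ricci tensor --- together with a spatial cutoff built from the evolving distance $d_{g(t)}(\cdot, x_0)$, supported well inside $B_{g(0)}(x_0, A)$ and identically $1$ on $B_{g(0)}(x_0, 3A/4)$ (possible on a short time interval because $e^{-2Kt}g(0) \le g(t) \le e^{2Kt}g(0)$). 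The error terms produced by the cutoff are controlled purely through $\Rc_{g(t)} \ge -Kg(t)$, via the Laplacian comparison $\Delta_{g(t)}d_{g(t)}(\cdot, x_0) \le c(n)\bigl(\sqrt{K} + d_{g(t)}(\cdot, x_0)^{-1}\bigr)$ and the distortion bound $|\pdt d_{g(t)}(\cdot, x_0)| \le K\,d_{g(t)}(\cdot, x_0)$; the delicate point is the bookkeeping --- in particular the choice of cutoff powers needed to absorb these terms against the reaction and gradient terms --- but this is by now routine, and one emerges with $\sigma = \sigma(n, A, K, \Lambda) > 0$ and $\Lambda_0'' = \Lambda_0''(n, A, K, \Lambda)$ for which $|\Rm| \le \Lambda_0''$ on $B_{g(0)}(x_0, 3A/4) \times [0, \sigma]$. (Alternatively, the integral estimates underlying Theorem \ref{Rm} can be localized in time and run on $[0, \sigma]$ while retaining the contribution of $\Lambda$.) Since $\tau \le \sigma$, the two slabs cover $[0, T]$, and $\Lambda_0 := \max\{\Lambda_0', \Lambda_0''\}$ is the bound demanded in the first paragraph; this completes the proof.
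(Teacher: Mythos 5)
The paper states Corollary~\ref{cor:bbs} without proof, so a direct comparison is not possible; the text preceding it suggests the authors regard it as an \emph{immediate} consequence of Theorem~\ref{Rm} combined with Shi's local derivative estimates. Your reduction is the natural one and the part of your argument handling $t\geq\tau$ is correct: apply Theorem~\ref{Rm} on $[0,t]$ centered at points $x_1\in B_{g(0)}(x_0,3A/4)$ with the theorem's radius $A/4$, note that the $1/(Kt)$ factor is controlled once $t\geq\tau$, and then invoke the Bando--Shi interior estimates on smaller balls to produce all higher derivatives with the $t^{m/2}$ weight. You have also correctly isolated the point the paper elides: because of the $(1/(KT)+A^{-2})^\beta$ factor, Theorem~\ref{Rm} gives nothing uniform as $t\searrow 0$, so the $m=0$ assertion $\sup_{B(x_0,A/2)\times[0,T]}|\Rm|\leq C(n,A,K,T,\Lambda)$ genuinely requires a separate short-time estimate. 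This is a real observation, not a nuisance: the introduction describes exactly this short-time persistence of local curvature bounds as a nontrivial result (\cite{Kotschwar}), proved there only indirectly.

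Where your proposal goes wrong is in treating this short-time step as ``routine'' via a localized maximum principle on $\partial_t|\Rm|^2\leq\Delta|\Rm|^2+c|\Rm|^3$. With a spatial cutoff $\phi$ and the test function $G=\phi^{2a}|\Rm|^2$, the maximum principle at an interior spatial maximum gives only an inequality of the shape
\begin{equation*}
0 \;\leq\; \partial_t G \;\leq\; \text{(cutoff errors)}\cdot|\Rm|^2 \;+\; c\,\phi^{2a}|\Rm|^3,
\end{equation*}
and since the reaction term has the \emph{wrong} sign and is superlinear, this inequality is vacuously true and yields no control of $\sup G$. To convert it into a usable ODE for $M(t)=\sup G(\cdot,t)$ one must substitute $|\Rm|=(G/\phi^{2a})^{1/2}$, at which point the coefficients carry negative powers of $\phi$ evaluated at the (a priori unknown) maximizing point, and the resulting differential inequality cannot be closed. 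The gradient term $-2|\nabla\Rm|^2$ does not rescue this: the cutoff errors it can absorb are of the same size as the ones it generates. This is precisely why local short-time curvature bounds for Ricci flow from initial data are \emph{not} obtained by a plain cutoff maximum principle, and why they were treated as a theorem rather than a lemma in the literature cited by the paper. Your parenthetical alternative --- re-running the integral estimate of Proposition~\ref{Int} on $[0,\sigma]$, where the right-hand side stays bounded as $t\to 0$, and performing a Moser iteration that keeps the initial slice (so that the base case is the pointwise bound $\Lambda$ rather than a parabolic cylinder shrinking to $\{t\}$) --- is the correct route and the one consonant with the paper's techniques; it should have been the primary argument rather than a footnote, and as written it is only a gesture. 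As it stands the proposal has the right architecture but leaves the one genuinely nontrivial step unproved, and the sketch offered for it would not work.
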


Our proof of Theorem \ref{Rm}  is based on the following integral estimate, which may itself be of some independent interest.

\begin{proposition}
\label{Int}Let $\left( M^n,g\left( t\right) \right)$ be a smooth solution to the
Ricci flow defined for $0\leq t \leq T$. Assume that there exist
constants $A$, $K>0$ and a point $x_0\in M$ such that the ball $B_{g(0)}(x_0, A/\sqrt{K})$ is compactly contained in $M$
and that
\begin{equation*}
\left\vert \mathrm{Ric}\right\vert \leq K\quad\mbox{ on }\
B_{g\left( 0\right) }\left( x_{0},\frac{A}{\sqrt{K}}\right)\times [0,T] .
\end{equation*}%
Then, for any $p\geq 3,$ there exists $c=c\left(n, p\right) >0$ so that for all $0\leq t\leq T$
\begin{gather*}
\int_{B_{g\left( 0\right) }\left( x_{0},\frac{1}{2}\frac{A}{\sqrt{K}}\right) }\left\vert \mathrm{Rm}\right\vert^p(x, t)\leq ce^{cKT}\int_{B_{g\left( 0\right) }\left( x_{0},\frac{A}{\sqrt{K}}\right) }\left\vert \mathrm{Rm}\right\vert^{p}(x, 0) \\
+cK^{p}\left( 1+A^{-2p}\right) e^{cKT}\ \mathrm{Vol}_{g\left( t\right)
}\left( B_{g\left( 0\right) }\left( x_{0},\frac{A}{\sqrt{K}}\right)
\right).
\end{gather*}
\end{proposition}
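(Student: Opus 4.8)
The plan is to derive a differential inequality for the local $L^p$-norm of $\Rm$ by testing the evolution equation against a suitable cutoff and integrating by parts. Recall that under the Ricci flow the curvature tensor satisfies a reaction-diffusion equation of Uhlenbeck-DeTurck type, schematically $\partial_t \Rm = \Delta \Rm + \Rm * \Rm$. From this one obtains, in the usual way, the scalar inequality
\begin{equation*}
\left(\pdt - \Delta\right)|\Rm|^2 \leq -2|\nabla\Rm|^2 + c(n)|\Rm|^3,
\end{equation*}
and hence, after multiplying by $|\Rm|^{p-2}$ and rearranging, a Bochner-type inequality for $u := |\Rm|^{p/2}$ of the form $(\pdt - \Delta)u^2 \leq -\epsilon(n,p)|\nabla u|^2 + c(n,p)|\Rm|\,u^2$. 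The first step is to record this inequality carefully; the term $|\Rm|\, u^2$ on the right is the one we need to absorb, and the point of the hypothesis is that $|\Rm|$ is \emph{not} a priori bounded, so we cannot simply estimate it by $\Lambda_0$.

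The second, and central, step is to exploit the Ricci bound to control this bad term. With $|\Rc|\le K$ we have the two-sided metric comparison $e^{-2Kt}g(0)\le g(t)\le e^{2Kt}g(0)$ on the ball $B:=B_{g(0)}(x_0,A/\sqrt K)$, so $B_{g(t)}(x_0, e^{-KT}A/\sqrt K)\supseteq B_{g(0)}(x_0,\tfrac12 A/\sqrt K)$ for $T$ in the relevant range up to harmless adjustments of the constant, and volumes and integrals taken with respect to $g(t)$ and $g(0)$ differ by factors of $e^{cKT}$. Fix a cutoff $\varphi$ supported in $B_{g(0)}(x_0, A/\sqrt K)$, equal to $1$ on the half-ball, with $|\nabla\varphi|_{g(0)}\le c\sqrt K/A$; note $\varphi$ is time-independent but its gradient with respect to $g(t)$ is still bounded by $c e^{KT}\sqrt K/A$. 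Testing the inequality for $u^2$ against $\varphi^2$ and integrating by parts over $B$ with respect to $dV_{g(t)}$, using also $\pdt dV_{g(t)} = -R\,dV_{g(t)}$ and $|R|\le nK$, we get
\begin{equation*}
\frac{d}{dt}\int_B \varphi^2 u^2\, dV_{g(t)} \leq -\epsilon\int_B \varphi^2|\nabla u|^2 + c\int_B |\nabla\varphi|^2 u^2 + c\int_B \varphi^2 |\Rm|\, u^2 + cK\int_B \varphi^2 u^2,
\end{equation*}
all integrals with respect to $dV_{g(t)}$. The third term contains $\int \varphi^2|\Rm|u^2 = \int \varphi^2 |\Rm|^{p+1}$, which has one more power of $|\Rm|$ than we are trying to bound; absorbing it is the crux of the argument.

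The key device — and this is where the hypothesis $p\ge 3$ and the constant $A$ enter — is a local Sobolev inequality. Using $|\Rc|\le K$ one controls the geometry of $(B, g(t))$ well enough to have a Sobolev inequality with a constant depending only on $n$ and on $K$, $A$ (via a lower bound on the volume of the relevant ball and an upper bound on its diameter), of the form $\big(\int (\varphi u)^{2n/(n-2)}\big)^{(n-2)/n}\le C_S\big(\int|\nabla(\varphi u)|^2 + \int (\varphi u)^2\big)$; in dimension two one uses the analogous statement with a large fixed exponent. Then one estimates $\int \varphi^2|\Rm|\,u^2$ by Hölder, putting $|\Rm|^{p/2}=u$ into a high $L^q$ norm controlled by interpolation between $L^2$ and $L^{2n/(n-2)}$, applying the Sobolev inequality, and using Young's inequality with a small parameter to move the resulting $\epsilon'\int\varphi^2|\nabla u|^2$ term onto the left-hand side against the good $-\epsilon\int\varphi^2|\nabla u|^2$. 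What survives on the right is a term of the form $C(n,p,K,A)\big(\int_B\varphi^2 u^2\big)^{\theta}$ with a power $\theta$ that, because of the gain from Sobolev, can be arranged to be $\le 1$ when $p\ge 3$ — together with the lower-order terms $c\int|\nabla\varphi|^2 u^2 + cK\int\varphi^2 u^2$, which, after expanding $u^2=|\Rm|^p$, are handled by absorbing a small multiple of $\int\varphi^2|\Rm|^p$ and leaving behind a term proportional to $K^p(1+A^{-2p})\,\mathrm{Vol}_{g(t)}(B)$. I expect the bookkeeping with the Sobolev constant and with tracking how $A$ enters through the geometry of the ball to be the main technical obstacle; the structure of the argument, however, is a standard Moser/Bochner absorption once that inequality is in hand.

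The final step is to integrate the resulting differential inequality
\begin{equation*}
\frac{d}{dt}\, y(t) \leq c\,K\, y(t) + c\,K^p\big(1+A^{-2p}\big)\,\mathrm{Vol}_{g(t)}\!\big(B_{g(0)}(x_0,A/\sqrt K)\big), \qquad y(t):=\int_B \varphi^2|\Rm|^p\, dV_{g(t)},
\end{equation*}
where I have used $y\le 1 + y$ to linearize the $y^\theta$ term and absorbed the extra constant. Applying Grönwall's inequality on $[0,t]$, bounding $\mathrm{Vol}_{g(s)}(B)\le e^{cKs}\mathrm{Vol}_{g(t)}(B)$ via the Ricci bound to pull the volume out of the integral, and recalling $\varphi\equiv 1$ on $B_{g(0)}(x_0,\tfrac12 A/\sqrt K)$, gives exactly the claimed estimate
\begin{equation*}
\int_{B_{g(0)}(x_0,\frac12\frac{A}{\sqrt K})}|\Rm|^p(x,t)\,dV_{g(t)} \le c e^{cKT}\int_{B_{g(0)}(x_0,\frac{A}{\sqrt K})}|\Rm|^p(x,0)\,dV_{g(0)} + cK^p(1+A^{-2p})e^{cKT}\,\mathrm{Vol}_{g(t)}\big(B_{g(0)}(x_0,\tfrac{A}{\sqrt K})\big),
\end{equation*}
after converting the initial integral back to $dV_{g(0)}$ at the cost of another $e^{cKT}$ factor.
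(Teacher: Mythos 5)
Your outline reaches the right first step---testing the evolution of $|\mathrm{Rm}|^p$ against a cutoff and integrating by parts---but the central absorption step, where you claim the super-linear term $\int \varphi^2 |\mathrm{Rm}|\, u^2 = \int \varphi^2 |\mathrm{Rm}|^{p+1}$ can be handled by Sobolev/interpolation/Young at the cost only of a power $\theta \leq 1$ of $y = \int \varphi^2 |\mathrm{Rm}|^p$, is false, and this is precisely where the Ricci hypothesis must do its work. Carry out the interpolation explicitly: writing $s = 2(p+1)/p$ and $\frac{1}{s} = \frac{\theta}{2} + \frac{1-\theta}{2n/(n-2)}$, one finds $s(1-\theta) = n/p$ (so Young applies only for $p > n/2$) and, after absorbing the $\int |\nabla(\varphi u)|^2$ piece, the surviving term is $C\bigl(\int (\varphi u)^2\bigr)^\gamma$ with
\[
\gamma \;=\; \frac{s\theta}{2 - s(1-\theta)} \;=\; \frac{2p - (n-2)}{2p - n} \;=\; 1 + \frac{2}{2p-n} \;>\; 1.
\]
No choice of $p$ makes $\gamma \leq 1$. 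The resulting differential inequality $\dot y \leq cKy + Cy^\gamma + \text{(vol.)}$ is of Riccati type; Gr\"onwall does not apply, and $y$ can blow up in finite time. At best you would get a bound on a short time interval whose length depends on $\Lambda_0$, $K$, $A$, not the bound on all of $[0,T]$ that Proposition \ref{Int} asserts. (Your linearization ``$y^\theta \leq 1 + y$'' is legitimate only for $\theta \leq 1$.) Note also that the standard local Sobolev inequality under a Ricci lower bound has a constant depending on a volume lower bound for the ball, not just on $n$, $K$, $A$; this is a secondary issue but another place where the bookkeeping as stated does not close.

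The paper's route avoids the $|\mathrm{Rm}|^{p+1}$ term entirely, and this is the crucial idea your proposal misses. Instead of using the schematic reaction-diffusion form $\partial_t \mathrm{Rm} = \Delta \mathrm{Rm} + \mathrm{Rm} * \mathrm{Rm}$, one uses the exact identity (Lemma \ref{L}, eq.~\eqref{eq:rmev}) expressing $\partial_t R_{ijk}^{\ l}$ purely in terms of $\nabla^2 \mathrm{Ric}$. This gives $\partial_t |\mathrm{Rm}|^2 = \nabla^2\mathrm{Ric} * \mathrm{Rm} + \mathrm{Ric} * \mathrm{Rm} * \mathrm{Rm}$, and the $\mathrm{Ric}*\mathrm{Rm}*\mathrm{Rm}$ piece is controlled by $K|\mathrm{Rm}|^2$, not by $|\mathrm{Rm}|^3$. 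After integrating by parts, the $\nabla^2\mathrm{Ric}$ piece produces $|\nabla\mathrm{Ric}|\,|\nabla\mathrm{Rm}|\,|\mathrm{Rm}|^{p-2}$, and the two factors $|\nabla\mathrm{Ric}|^2$ and $|\nabla\mathrm{Rm}|^2$ are then bounded using the Bochner-type identities for $|\mathrm{Ric}|^2$ and $|\mathrm{Rm}|^2$ (eqs.~\eqref{eq:rcnormev},~\eqref{eq:rmnormev}) together with $|\mathrm{Ric}|\leq K$: the error from \eqref{eq:rcnormev} is $cK^2|\mathrm{Rm}|$, which weighted by $|\mathrm{Rm}|^{p-1}$ gives only $|\mathrm{Rm}|^p$. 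After a further integration by parts one obtains a \emph{linear} inequality $\tfrac{d}{dt}U \leq cK U + cK\int|\mathrm{Rm}|^{p-1}|\nabla\phi|^2\phi^{2p-2}$ for a modified functional
\[
U = \int |\mathrm{Rm}|^p\phi^{2p} + \frac{1}{K}\int|\mathrm{Ric}|^2|\mathrm{Rm}|^{p-1}\phi^{2p} + c_1 K\int|\mathrm{Rm}|^{p-1}\phi^{2p},
\]
which Gr\"onwall then integrates on $[0,T]$. To reproduce the result you should replace the $|\mathrm{Rm}|^3$ nonlinearity by this $\nabla^2\mathrm{Ric}$ structure; the Sobolev inequality is not needed for Proposition~\ref{Int} at all (it enters only later, in the Moser iteration that turns the $L^p$ bound into the pointwise bound of Theorem~\ref{Rm}).
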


The proof of this proposition is an adaptation of the method in \cite{MW}.  In that reference, the reduction from $\Rm$ to $\Rc$ is made possible
by the soliton identities. Here, it is made possible by the fact that the evolution of $\Rm$ can be expressed purely in terms of the second covariant
derivatives of $\Rc$, see Lemma \ref{L} below. An analog of the above inequality is also valid for $p=2$, see Remark \ref{rem:p2} below.  We note that
Xu \cite{Xu} (cf. \cite{Yang}) has also obtained local curvature estimates for the Ricci flow  by related integral methods.

We prove Proposition \ref{Int} in the following section by the means of an energy estimate.  In Section \ref{sec:rmproof},
we combine it with a standard iteration argument  to
prove Theorem \ref{Rm}.

In Section \ref{sec:blowup}, we use a variation on the above methods to investigate the possible rates of blow-up
of the Ricci curvature at a finite-time singularity. Note that Part (b) of Corollary \ref{cor:rcbound} implies that, if a complete solution $g(t)$ to the Ricci flow
is defined on a maximal interval $[0, T)$ with $T < \infty$ and $\sup_M|\Rm|(x, t) < \infty$ for all $0\leq t < T$,
then the Ricci curvature must blow-up as $t\nearrow T$, i.e., there
must exist a sequence of times $t_i \nearrow T$ along which $\lim_{t_i\nearrow T}\sup_M|\Rc|(x, t_i) = \infty$.
The theorem does not say anything, however, about how rapidly the blow-up must occur.  By contrast,
for the Riemann curvature tensor, it is known not only that one actually has $\lim_{t\nearrow T}\sup_M|\Rm|(x, t) = \infty$
at $T$, but that, in fact, $\sup_M |\Rm|(x, t) \geq 1/(8(T-t))$. This is a simple consequence of the parabolic maximum principle;
see, e.g., Lemma 8.7 of \cite{ChowLuNi}. It is natural to ask whether there is a corresponding minimal rate of blow-up for the Ricci curvature.
Our methods here allow us to give at least a partial answer to this question. In Theorem \ref{Blowup} of Section \ref{sec:blowup},
using a somewhat more careful iteration argument,
we prove that in the situation just described, we must at least have
\[
      \sup_M|\Rc|(x, t_i) \geq \frac{\varepsilon}{T-t_i}
\]
along a subsequence $t_i \nearrow T$ for some $\varepsilon > 0$ depending only on the dimension.

\section{The main estimate}
 
The following differential inequality is the primary ingredient in the proof of Proposition \ref{Int}. Let $K(t)$ be a positive $C^1$-function on $[0,T]$.

\begin{proposition}
\label{A}
Let $g(t)$ be a smooth solution to the Ricci flow on $M^n$, defined for $0 \leq t\leq T$, satisfying the uniform Ricci bound
\[
 \sup_{\Omega} \left\vert\mathrm{Ric}\right\vert(x, t)\leq K(t)
\]
on some open $\Omega \subset M$, for all $0\leq t\leq T$. Then, for any $p \geq 3$, there are constants $c_1$, $c_2 > 0$ depending only on $n$ and $p$,
such that 
\begin{align}
\begin{split}\label{eq:diffeq}
&\frac{d}{dt}\left( \int_{M}\left\vert \mathrm{Rm}\right\vert ^{p}\phi
^{2p}+\frac{1}{K}\int_{M}\left\vert \mathrm{Ric}\right\vert ^{2}\left\vert \mathrm{Rm}%
\right\vert ^{p-1}\phi ^{2p}+c_{1}K\int_{M}\left\vert \mathrm{Rm}\right\vert
^{p-1}\phi ^{2p}\right)   \\
&\qquad\qquad\leq c_2K\int_{M}\left\vert \mathrm{Rm}\right\vert ^{p}\phi^{2p}
+c_2K\int_M \left\vert \mathrm{Rm}\right\vert ^{p-1}\left\vert \nabla \phi\right\vert^2
\phi^{2p-2}\\
&\qquad\qquad-\frac{K'}{K^2}\int_{M}\left\vert \mathrm{Ric}\right\vert ^{2}\left\vert \mathrm{Rm}
\right\vert ^{p-1}\phi ^{2p}+c_{1}K' \int_{M}\left\vert \mathrm{Rm}
\right\vert ^{p-1}\phi ^{2p},
\end{split}
\end{align}
for any Lipschitz function $\phi (x)$ with support in $\Omega$.
\end{proposition}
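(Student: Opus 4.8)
The plan is to differentiate the quantity $E(t):=E_1(t)+E_2(t)+E_3(t)$ on the left of \eqref{eq:diffeq}, where $E_1=\int_M|\Rm|^p\phi^{2p}$, $E_2=\tfrac1K\int_M|\Rc|^2|\Rm|^{p-1}\phi^{2p}$, $E_3=c_1K\int_M|\Rm|^{p-1}\phi^{2p}$, and to bound $E'(t)$ term by term. The crucial structural input is Lemma~\ref{L}: the evolution of $\Rm$ contains \emph{no} term quadratic in $\Rm$, i.e.\ schematically $\partial_t\Rm=\nabla^2\Rc\ast g+\Rm\ast\Rc$. This is what lets us handle $E_1'$ without producing an uncontrollable $\int|\Rm|^{p+1}\phi^{2p}$; the price is that $\partial_t\Rm$ carries no parabolic dissipation, and the two correction terms $E_2,E_3$ are included precisely to supply the missing negative gradient terms. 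For these lower-order weights one is free to use instead the standard evolution $\partial_t|\Rm|^2=\Delta|\Rm|^2-2|\nabla\Rm|^2+Q(\Rm)$ with $|Q(\Rm)|\le c|\Rm|^3$, because there the cubic reaction term, once weighted, is only of size $O(|\Rm|^p)$. All integrations by parts are boundary-free since $\phi$ is supported in $\Omega$; if $p$ is not an integer one would replace $|\Rm|$ by $(|\Rm|^2+\varepsilon)^{1/2}$ throughout, derive the estimate, and let $\varepsilon\to0$.

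\emph{The leading term.} Using Lemma~\ref{L}, $\partial_tg^{-1}=2\Rc$ and $\partial_t\,dV=-R\,dV$, one gets $E_1'=p\int|\Rm|^{p-2}\langle\nabla^2\Rc\ast g,\Rm\rangle\phi^{2p}+\mathcal E_0$, where the remainder $\mathcal E_0$ gathers the metric, $\Rm\ast\Rc$, and $-R|\Rm|^p$ contributions. One checks $|\mathcal E_0|\le cK\int|\Rm|^p\phi^{2p}$, using $|R|,|\Rc|\le c(n)|\Rm|$ for some terms, $|\Rc|\le K$ for the rest, and the standard cancellation of the metric variation against the Ricci contraction in $\partial_t|\Rm|^2$. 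Integrating the leading term by parts once, so that one derivative falls on $|\Rm|^{p-2}\Rm\phi^{2p}$, and then using the contracted second Bianchi identity $\nabla^iR_{ijkl}=\nabla_kR_{jl}-\nabla_lR_{jk}$ to rewrite the resulting divergence of $\Rm$ through $\nabla\Rc$, one is left with a bounded linear combination of
\[
\int|\Rm|^{p-2}|\nabla\Rm|\,|\nabla\Rc|\,\phi^{2p},\qquad\int|\Rm|^{p-2}|\nabla\Rc|^2\phi^{2p},\qquad\int|\Rm|^{p-1}|\nabla\Rc|\,|\nabla\phi|\,\phi^{2p-1}.
\]

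\emph{The corrections.} In $E_2'$ the factor $\tfrac{d}{dt}\tfrac1K=-\tfrac{K'}{K^2}$ produces exactly the term $-\tfrac{K'}{K^2}\int|\Rc|^2|\Rm|^{p-1}\phi^{2p}$ on the right of \eqref{eq:diffeq}; for the rest one uses $\partial_t|\Rc|^2=\Delta|\Rc|^2-2|\nabla\Rc|^2+\Rm\ast\Rc\ast\Rc$ together with the standard evolution of $|\Rm|^{p-1}$. The term $-2|\nabla\Rc|^2$ yields the decisive dissipation $-\tfrac2K\int|\nabla\Rc|^2|\Rm|^{p-1}\phi^{2p}$, and the term $-2|\nabla\Rm|^2$, weighted by $\tfrac1K|\Rc|^2|\Rm|^{p-3}$, yields $-\tfrac{p-1}{K}\int|\Rc|^2|\Rm|^{p-3}|\nabla\Rm|^2\phi^{2p}$; integrating the two Laplacians by parts gives additional dissipative quantities of the form $-\int|\Rc|^2|\Rm|^{p-3}\bigl|\nabla|\Rm|\bigr|^2\phi^{2p}$ plus further terms of the three shapes displayed above (now with suitable powers of $K$) and cutoff terms controlled by $\int|\Rm|^{p-1}|\nabla\phi|^2\phi^{2p-2}$; the reaction, metric, and scalar-curvature contributions again reduce to $\le cK\int|\Rm|^p\phi^{2p}$. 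Likewise, in $E_3'$ the factor $K'$ gives $c_1K'\int|\Rm|^{p-1}\phi^{2p}$, and the standard evolution of $|\Rm|^{p-1}$ supplies the dissipation $-c_1(p-1)K\int|\Rm|^{p-3}|\nabla\Rm|^2\phi^{2p}$ along with lower-order terms of the same shapes.

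\emph{Absorption and the main obstacle.} It remains to dominate every curvature-derivative term by the dissipative reservoirs $-\tfrac2K\int|\nabla\Rc|^2|\Rm|^{p-1}\phi^{2p}$, $-c_1(p-1)K\int|\Rm|^{p-3}|\nabla\Rm|^2\phi^{2p}$ and (where available) $-\tfrac{p-1}{K}\int|\Rc|^2|\Rm|^{p-3}|\nabla\Rm|^2\phi^{2p}$, plus $cK\int|\Rm|^p\phi^{2p}$ and $cK\int|\Rm|^{p-1}|\nabla\phi|^2\phi^{2p-2}$. This uses $|\nabla\Rc|\le c(n)|\nabla\Rm|$, Kato's inequality $\bigl|\nabla|\Rm|\bigr|\le|\nabla\Rm|$, the elementary split $|\Rm|^{p-2}\le K^{-1}|\Rm|^{p-1}+K|\Rm|^{p-3}$ (valid because $p\ge3$), and Young's inequality with carefully tuned parameters; e.g.\ $\int|\Rm|^{p-2}|\nabla\Rm|\,|\nabla\Rc|\,\phi^{2p}\le\varepsilon K^{-1}\int|\Rm|^{p-1}|\nabla\Rc|^2\phi^{2p}+C_\varepsilon K\int|\Rm|^{p-3}|\nabla\Rm|^2\phi^{2p}$, where the first summand is absorbed by the $\Rc$-dissipation and the second, after choosing $c_1=c_1(n,p)$ large, by the $\Rm$-dissipation; the pieces of the cutoff terms not absorbed by a dissipative reservoir collapse into $cK\int|\Rm|^{p-1}|\nabla\phi|^2\phi^{2p-2}$. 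The technical heart of the argument is exactly this bookkeeping: each bad term carries a strictly lower power of $|\Rm|$, or an extra factor of $K$, than the naive dissipation, so the split above and the hypothesis $p\ge3$ are what make it possible to route every term to a reservoir of matching homogeneity, and one must verify that the single $|\Rc|^2$-free dissipation — the one from $E_3'$ — can, after enlarging $c_1$, dominate the entire accumulated $|\nabla\Rm|^2$ defect. Discarding the remaining negative quantities yields \eqref{eq:diffeq} with $c_2=c_2(n,p)$.
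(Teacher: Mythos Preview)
Your plan is correct and is essentially the paper's argument, organized in the reverse direction: the paper starts from $E_1'$ alone and, when the bad gradient integrals $\frac{1}{K}\int|\nabla\Rc|^2|\Rm|^{p-1}\phi^{2p}$ and $K\int|\nabla\Rm|^2|\Rm|^{p-3}\phi^{2p}$ appear after Young's inequality, invokes the Bochner-type inequalities $|\nabla\Rc|^2\le\tfrac12(\Delta-\partial_t)|\Rc|^2+cK^2|\Rm|$ and $|\nabla\Rm|^2\le\tfrac12(\Delta-\partial_t)|\Rm|^2+c|\Rm|^3$ to convert each into $-\tfrac{d}{dt}$ of the corresponding $E_2$- or $E_3$-integral, which is then moved to the left-hand side. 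You instead differentiate the full energy and use the $-2|\nabla\Rc|^2$ and $-2|\nabla\Rm|^2$ contributions from $E_2'$ and $E_3'$ as dissipative reservoirs into which the same bad terms are absorbed; the integration-by-parts steps and the Young/absorption bookkeeping (including the role of $p\ge 3$ and the freedom to enlarge $c_1$) are identical, and your invocation of the second Bianchi identity is a harmless refinement the paper omits.
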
	
\subsection{Proof of Proposition \ref{A}}
The proof requires a bit of computation. We first recall the evolution equations for the curvature see, e.g, Chapter 6 of \cite{CK}.

\begin{lemma}
\label{L} There exists a constant $c > 0$ such that
\begin{align}
\label{eq:rcnormev}
\left\vert \nabla \mathrm{Ric}\right\vert ^{2} &\leq \frac{1}{2}\left(
\Delta -\partial _{t}\right) \left\vert \mathrm{Ric}\right\vert
^{2}+cK^2\left\vert \mathrm{Rm}\right\vert, \\
\label{eq:rmnormev}
\left\vert \nabla \mathrm{Rm}\right\vert ^{2} &\leq\frac{1}{2}\left(
\Delta -\partial _{t}\right) \left\vert \mathrm{Rm}\right\vert
^{2}+c\left\vert \mathrm{Rm}\right\vert ^{3}, \\
\begin{split}\label{eq:rmev}
\partial _{t}R_{ijk}^{l} &=g^{lq}\left(\nabla _{i}\nabla _{q}R_{jk}+\nabla
_{j}\nabla _{i}R_{kq}+\nabla _{j}\nabla _{k}R_{iq}\right)\\
&\phantom{=}
-g^{lq}\left(\nabla _{i}\nabla _{j}R_{kq}+\nabla _{i}\nabla
_{k}R_{jq}+\nabla _{j}\nabla _{q}R_{ik}\right),
\end{split}
\end{align}
on $\Omega\times[0, T]$.
\end{lemma}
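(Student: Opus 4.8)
The three statements all follow from the standard evolution equations for the curvature under \eqref{eq:rf}; see Chapter 6 of \cite{CK}. Of the three, \eqref{eq:rmev} is an exact identity whose only noteworthy feature is its form, while \eqref{eq:rcnormev} and \eqref{eq:rmnormev} are routine Bochner-type estimates provided one keeps track of the time derivatives of the metric concealed inside the norms $|\cdot|^2$.

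For \eqref{eq:rmev}, the plan is to start from the variation of the Levi-Civita connection along \eqref{eq:rf}, namely $\partial_t\Gamma_{jk}^l = -g^{lq}\left(\nabla_j R_{kq} + \nabla_k R_{jq} - \nabla_q R_{jk}\right)$, together with the general identity $\partial_t R_{ijk}^l = \nabla_i\left(\partial_t\Gamma_{jk}^l\right) - \nabla_j\left(\partial_t\Gamma_{ik}^l\right)$, which holds for any smooth family of metrics because $\partial_t\Gamma$ is a tensor. Substituting the first expression into the second and commuting $g^{lq}$ through the covariant derivatives (using $\nabla g = 0$) produces precisely the six terms on the right-hand side of \eqref{eq:rmev}. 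The point to emphasize is that no covariant derivatives are ever commuted in this computation, so no zeroth-order curvature terms appear: the evolution of $\Rm$ is expressed entirely through unsymmetrized second covariant derivatives of $\Rc$. This is exactly the structural feature that will later allow one factor of $\Rm$ to be traded for $\nabla^2\Rc$ in the energy estimate of Proposition \ref{A}.

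For \eqref{eq:rmnormev}, I would invoke Hamilton's equation $\partial_t R_{ijkl} = \Delta R_{ijkl} + Q_{ijkl}$, where $Q$ is a fixed quadratic expression in $\Rm$, so $|Q| \leq c|\Rm|^2$. Differentiating $|\Rm|^2$ and using $\partial_t g^{ij} = 2R^{ij}$ for each of the four inverse-metric contractions hidden in the norm gives $\partial_t|\Rm|^2 = \Delta|\Rm|^2 - 2|\nabla\Rm|^2 + E$, where $E$ collects $2\langle Q, \Rm\rangle$ and terms of the schematic form $\Rc \ast \Rm \ast \Rm$; in every case $|E| \leq c|\Rm|^3$, and rearranging gives \eqref{eq:rmnormev}. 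The argument for \eqref{eq:rcnormev} is structurally identical: starting from the Lichnerowicz-type equation $\partial_t R_{ij} = \Delta R_{ij} + 2R_{ikjl}R^{kl} - 2R_{ik}R^k_j$ and differentiating $|\Rc|^2$ (now with two inverse-metric factors), one obtains $\partial_t|\Rc|^2 = \Delta|\Rc|^2 - 2|\nabla\Rc|^2 + E'$, where every term of $E'$ is of the form $\Rm\ast\Rc\ast\Rc$ or $\Rc\ast\Rc\ast\Rc$. Using the hypothesis $|\Rc|\leq K$ on $\Omega \times [0,T]$ together with the algebraic bound $|\Rc|\leq c(n)|\Rm|$, each such term is dominated by $cK^2|\Rm|$, giving $\partial_t|\Rc|^2 \leq \Delta|\Rc|^2 - 2|\nabla\Rc|^2 + cK^2|\Rm|$, which is \eqref{eq:rcnormev}.

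None of these computations is hard and I anticipate no real obstacle; the only two points that require care are (i) correctly accounting for the contributions of $\partial_t g^{ij} = 2R^{ij}$ when differentiating the squared norms --- this is what leaves a single free power of $|\Rm|$ on the right of \eqref{eq:rcnormev} rather than a pure power of $K$ --- and (ii) recording \eqref{eq:rmev} in the unsymmetrized, zeroth-order-free form above rather than in its more familiar reaction-diffusion form.
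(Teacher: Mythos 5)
Your proposal is correct, and it reproduces the standard derivations that the paper simply recalls by citation to Chapter 6 of \cite{CK}: the first-variation formula $\partial_t R_{ijk}^l=\nabla_i(\partial_t\Gamma_{jk}^l)-\nabla_j(\partial_t\Gamma_{ik}^l)$ combined with $\partial_t\Gamma_{jk}^l=-g^{lq}(\nabla_jR_{kq}+\nabla_kR_{jq}-\nabla_qR_{jk})$ gives exactly the six second-derivative terms of \eqref{eq:rmev}, and the Bochner-type rearrangements, with the error terms estimated via $|\Rc|\leq K$ and $|\Rc|\leq c(n)|\Rm|$, yield \eqref{eq:rcnormev} and \eqref{eq:rmnormev}. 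No gaps.
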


In the argument below, we will use $c$ to represent a positive constant depending only on $n$
and $p$, and will simply write $\vert\cdot\vert$ for the various tensor norms $\vert\cdot\vert_{g(t)}$
induced by $g(t)$. All integrals are taken relative to the evolving Riemannian measure $dv_t = dv_{g(t)}$,
and we use the standard convention that $A\ast B$ represents some linear combination of contractions
of the tensor product $A\otimes B$ relative to the metric $g(t)$. As above, we use $R$ to denote the scalar curvature of $g(t)$.

To begin, we use equation \eqref{eq:rmev} to write
\begin{align*}
	\partial_t\left\vert \mathrm{Rm}\right\vert ^{2} &=\partial_t\left(
	g^{ia}g^{jb}g^{kc}g_{ld}R_{ijk}^{l}R_{abc}^d\right)\\
	&=\nabla ^{2}\mathrm{Ric}\ast \mathrm{Rm}+\mathrm{Ric}\ast \mathrm{Rm}\ast 
	\mathrm{Rm}.
\end{align*}%
Since the volume form $dv_t $ evolves by $\partial_t dv_t =-R\,dv_t$, we then have 
\begin{align*}
\frac{d}{dt}\int_{M}\left\vert \mathrm{Rm}\right\vert ^{p}\phi ^{2p}
&=\int_{M}\frac{\partial}{\partial t}\left( \left\vert \mathrm{Rm}\right\vert ^{p}\right)
\phi ^{2p}-\int_{M}R\left\vert \mathrm{Rm}\right\vert ^{p}\phi ^{2p} \\
&\leq c\int_{M}\left\vert \mathrm{Rm}\right\vert ^{p-2}\left( \nabla ^{2}%
\mathrm{Ric}\ast \mathrm{Rm}\right) \phi ^{2p}+cK\int_{M}\left\vert \mathrm{Rm%
}\right\vert ^{p}\phi ^{2p}
\end{align*}%
for some $c > 0$.
Integrating by parts, we find that%
\begin{align*}
\int_{M}\left\vert \mathrm{Rm}\right\vert ^{p-2}\left( \nabla ^{2}\mathrm{Ric%
}\ast \mathrm{Rm}\right) \phi ^{2p} &=\int_{M}\left\vert \mathrm{Rm}%
\right\vert ^{p-2}\left( \nabla \mathrm{Ric}\ast \nabla \mathrm{Rm}\right)
\phi ^{2p} \\
&\phantom{=}+\int_{M}\left( \nabla \mathrm{Ric}\ast \nabla \left\vert \mathrm{Rm}%
\right\vert ^{p-2}\ast \mathrm{Rm}\right) \phi ^{2p} \\
&\phantom{=}+\int_{M}\left\vert \mathrm{Rm}\right\vert ^{p-2}\left( \nabla \mathrm{Ric}%
\ast \nabla \phi ^{2p}\ast \mathrm{Rm}\right),
\end{align*}%
from which we readily obtain that 
\begin{align}
\begin{split}
\frac{d}{dt}\int_{M}\left\vert \mathrm{Rm}\right\vert ^{p}\phi ^{2p} &\leq
c\int_{M}\left\vert \nabla \mathrm{Ric}\right\vert \left\vert \nabla 
\mathrm{Rm}\right\vert \left\vert \mathrm{Rm}\right\vert ^{p-2}\phi ^{2p}
\label{b3} \\
&\phantom{\leq}+c\int_{M}\left\vert \nabla \mathrm{Ric}\right\vert \left\vert \mathrm{Rm}%
\right\vert ^{p-1}\left\vert \nabla \phi \right\vert \phi ^{2p-1}  \\
&\phantom{\leq}+cK\int_{M}\left\vert \mathrm{Rm}\right\vert ^{p}\phi ^{2p}.
\end{split}
\end{align}%
We may estimate the first term on the right of \eqref{b3} by
\begin{align}
\begin{split}
c\int_{M}\left\vert \nabla \mathrm{Ric}\right\vert \left\vert \nabla \mathrm{%
Rm}\right\vert \left\vert \mathrm{Rm}\right\vert ^{p-2}\phi ^{2p} 
&\leq %
\frac{1}{2K}\int_{M}\left\vert \nabla \mathrm{Ric}\right\vert ^{2}\left\vert 
\mathrm{Rm}\right\vert ^{p-1}\phi ^{2p}  \label{b4} \\
&\phantom{\leq}+cK\int_{M}\left\vert \nabla \mathrm{Rm}\right\vert ^{2}\left\vert \mathrm{%
Rm}\right\vert ^{p-3}\phi ^{2p},
\end{split}
\end{align}%
and the second term, similarly, by
\begin{align}\label{b5}
\begin{split} 
&c\int_{M}\left\vert \nabla \mathrm{Ric}\right\vert \left\vert \mathrm{Rm}%
\right\vert ^{p-1}\left\vert \nabla \phi \right\vert \phi ^{2p-1}\\ 
 &\qquad\leq \frac{1}{2K}\int_{M}\left\vert \nabla \mathrm{Ric}\right\vert^{2}\left\vert \mathrm{Rm}\right\vert ^{p-1}\phi ^{2p}
+cK\int_{M}\left\vert \mathrm{Rm}\right\vert ^{p-1} \left\vert\nabla\phi\right\vert^2
\phi ^{2p-2}.
\end{split}
\end{align}
Using (\ref{b4}) and (\ref{b5}) in (\ref{b3}), it follows that 
\begin{align}
\begin{split}
\frac{d}{dt}\int_{M}\left\vert \mathrm{Rm}\right\vert ^{p}\phi ^{2p} &\leq
\frac{1}{K}\int_{M}\left\vert \nabla \mathrm{Ric}\right\vert ^{2}\left\vert \mathrm{Rm}%
\right\vert ^{p-1}\phi ^{2p}  \label{b7} \\
&\phantom{\leq}+cK\int_{M}\left\vert \nabla \mathrm{Rm}\right\vert ^{2}\left\vert \mathrm{%
Rm}\right\vert ^{p-3}\phi ^{2p} \\
&\phantom{\leq}+cK\int_{M}\left\vert \mathrm{Rm}\right\vert ^{p-1} \left\vert\nabla\phi\right\vert^2
\phi ^{2p-2}\\
&\phantom{\leq}+cK\int_{M}\left\vert \mathrm{Rm}\right\vert ^{p}\phi ^{2p}.  \\
\end{split}
\end{align}

We now set about to estimate the first two terms on the right of \eqref{b7}.
Using equation \eqref{eq:rcnormev} from Lemma \ref{L}, we get 
\begin{align*}
\frac{1}{K}\int_{M}\left\vert \nabla \mathrm{Ric}\right\vert ^{2}\left\vert \mathrm{Rm}%
\right\vert ^{p-1}\phi ^{2p}&\leq \frac{1}{2K}\int_{M}\left( \left( \Delta
-\partial _{t}\right) \left\vert \mathrm{Ric}\right\vert ^{2}\right)
\left\vert \mathrm{Rm}\right\vert ^{p-1}\phi ^{2p}\\
&\phantom{\leq}+cK\int_{M}\left\vert 
\mathrm{Rm}\right\vert ^{p}\phi ^{2p}.
\end{align*}%
Hence, integrating by parts, it follows that%
\begin{align}
\begin{split}\label{b8}
\frac{1}{K}\int_{M}\left\vert \nabla \mathrm{Ric}\right\vert ^{2}\left\vert \mathrm{Rm}%
\right\vert ^{p-1}\phi ^{2p} 
&\leq -\frac{1}{2K}\int_{M}\left\langle \nabla
\left\vert \mathrm{Ric}\right\vert ^{2},\nabla \left\vert \mathrm{Rm}%
\right\vert ^{p-1}\right\rangle \phi ^{2p}   \\
&\phantom{\leq}-\frac{1}{2K}\int_{M}\left\langle \nabla \left\vert \mathrm{Ric}\right\vert
^{2},\nabla \phi ^{2p}\right\rangle \left\vert \mathrm{Rm}\right\vert ^{p-1}
 \\
&\phantom{\leq}-\frac{1}{2K}\frac{d}{dt}\int_{M}\left\vert \mathrm{Ric}\right\vert
^{2}\left\vert \mathrm{Rm}\right\vert ^{p-1}\phi ^{2p}   \\
&\phantom{\leq}+\frac{1}{2K}\int_{M}\left\vert \mathrm{Ric}\right\vert ^{2}\partial
_{t}\left\vert \mathrm{Rm}\right\vert ^{p-1}\phi ^{2p}   \\
&\phantom{\leq}+cK\int_{M}\left\vert \mathrm{Rm}\right\vert ^{p}\phi ^{2p}. 
\end{split}
\end{align}

Since $\left\vert \mathrm{Ric}\right\vert(t) \leq K(t)$ on the support of $\phi$, for the first term in \eqref{b8}, we have
\begin{align}
\begin{split}\label{b8b}
-\frac{1}{2K}\int_{M}\left\langle \nabla \left\vert \mathrm{Ric}\right\vert
^{2},\nabla \left\vert \mathrm{Rm}\right\vert ^{p-1}\right\rangle \phi ^{2p}
&\leq c\int_{M}\left\vert \nabla \mathrm{Ric}\right\vert \left\vert \nabla 
\mathrm{Rm}\right\vert \left\vert \mathrm{Rm}\right\vert ^{p-2}\phi ^{2p} \\
&\leq \frac{1}{10K}\int_{M}\left\vert \nabla \mathrm{Ric}\right\vert
^{2}\left\vert \mathrm{Rm}\right\vert ^{p-1}\phi ^{2p} \\
&\phantom{\leq}+cK\int_{M}\left\vert \nabla \mathrm{Rm}\right\vert ^{2}\left\vert \mathrm{%
Rm}\right\vert ^{p-3}\phi ^{2p}.
\end{split}
\end{align}%
Furthermore, as in (\ref{b5}), we have
\begin{align}
\begin{split}
&-\frac{1}{2K}\int_{M}\left\langle \nabla \left\vert \mathrm{Ric}\right\vert
^{2},\nabla \phi ^{2p}\right\rangle \left\vert \mathrm{Rm}\right\vert ^{p-1}
\label{b9}\\
&\qquad\qquad\leq c\int_{M}\left\vert \nabla \mathrm{Ric}\right\vert \left\vert \mathrm{%
Rm}\right\vert ^{p-1}\left\vert \nabla \phi \right\vert \phi ^{2p-1}  \\
&\qquad\qquad\leq \frac{1}{10K}\int_{M}\left\vert \nabla \mathrm{Ric}\right\vert
^{2}\left\vert \mathrm{Rm}\right\vert ^{p-1}\phi ^{2p} +cK\int_{M}\left\vert \mathrm{Rm}\right\vert ^{p-1} \left\vert\nabla\phi\right\vert^2
\phi ^{2p-2}  
\end{split}
\end{align}
for the second term in \eqref{b8}.
Hence, using \eqref{b8b} and \eqref{b9}, equation (\ref{b8}) implies 
\begin{align}
\begin{split}\label{b10}
&\frac{4}{5K}\int_{M}\left\vert \nabla \mathrm{Ric}\right\vert
^{2}\left\vert \mathrm{Rm}\right\vert ^{p-1}\phi ^{2p}   \\
&\qquad\leq cK\int_{M}\left\vert \nabla \mathrm{Rm}\right\vert ^{2}\left\vert 
\mathrm{Rm}\right\vert ^{p-3}\phi ^{2p}-\frac{1}{2K}\frac{d}{dt}%
\int_{M}\left\vert \mathrm{Ric}\right\vert ^{2}\left\vert \mathrm{Rm}%
\right\vert ^{p-1}\phi ^{2p}  \\
&\qquad\phantom{\leq}+\frac{1}{2K}\int_{M}\left\vert \mathrm{Ric}\right\vert ^{2}\partial
_{t}\left\vert \mathrm{Rm}\right\vert ^{p-1}\phi ^{2p}+cK\int_{M}\left\vert 
\mathrm{Rm}\right\vert ^{p}\phi ^{2p}  \\
&\qquad\phantom{\leq}+cK\int_{M}\left\vert \mathrm{Rm}\right\vert ^{p-1} \left\vert\nabla\phi\right\vert^2
\phi ^{2p-2}. 
\end{split}
\end{align}

Applying again equation \eqref{eq:rmev} of Lemma \ref{L}, we may estimate the third term on the right of \eqref{b10} by
\begin{align}
\begin{split}
\frac{1}{2K}\int_{M}\left\vert \mathrm{Ric}\right\vert ^{2}\partial
_{t}\left\vert \mathrm{Rm}\right\vert ^{p-1}\phi ^{2p} 
&\leq
\frac{c}{K}\int_{M}\left\vert \mathrm{Ric}\right\vert ^{2}\left( \nabla ^{2}\mathrm{%
Ric}\ast \mathrm{Rm}\right) \left\vert \mathrm{Rm}\right\vert ^{p-3}\phi
^{2p} \label{f1}\\
&\phantom{\leq} +cK\int_{M}\left\vert \mathrm{Rm}\right\vert ^{p}\phi ^{2p}
\end{split}
\end{align}
where we have used that $|\Rc|\leq c|\Rm|$.

Integrating by parts on the first term of the right hand side of (\ref{f1}),
it follows that  
\begin{align*}
	\int_{M}\left\vert \mathrm{Ric}\right\vert ^{2}\left( \nabla ^{2}\mathrm{Ric}%
	\ast \mathrm{Rm}\right) \left\vert \mathrm{Rm}\right\vert ^{p-3}\phi ^{2p}
	&=\int_{M}\left( \nabla \mathrm{Ric}\ast \nabla \left\vert \mathrm{Ric}%
	\right\vert ^{2}\ast \mathrm{Rm}\right) \left\vert \mathrm{Rm}\right\vert
	^{p-3}\phi ^{2p} \\
	&\phantom{=}+\int_{M}\left( \nabla \mathrm{Ric}\ast \nabla \mathrm{Rm}\right)
	\left\vert \mathrm{Ric}\right\vert ^{2}\left\vert \mathrm{Rm}\right\vert
	^{p-3}\phi ^{2p} \\
	&\phantom{=}+\int_{M}\left( \nabla \mathrm{Ric}\ast \nabla \left\vert \mathrm{Rm}%
	\right\vert ^{p-3}\ast \mathrm{Rm}\right) \left\vert \mathrm{Ric}\right\vert
	^{2}\phi ^{2p} \\
	&\phantom{=}+\int_{M}\left( \nabla \mathrm{Ric}\ast \nabla \phi ^{2p}\ast \mathrm{Rm}%
	\right) \left\vert \mathrm{Ric}\right\vert ^{2}\left\vert \mathrm{Rm}%
	\right\vert ^{p-3}.
\end{align*}
Hence, using again that $\left\vert \mathrm{Ric}\right\vert \leq c\left\vert 
\mathrm{Rm}\right\vert $ and also that $\left\vert \mathrm{Ric}\right\vert
\leq K$, we can estimate the above by 
\begin{align*}
\frac{c}{K}\int_{M}\left\vert \mathrm{Ric}\right\vert ^{2}\left( \nabla ^{2}\mathrm{Ric}%
\ast \mathrm{Rm}\right) \left\vert \mathrm{Rm}\right\vert ^{p-3}\phi ^{2p}
&\leq c\int_{M} \left\vert \nabla \mathrm{Ric}\right\vert ^{2}\left\vert \mathrm{Rm}\right\vert ^{p-2}\phi ^{2p} \\
& \phantom{\leq}+c\int_{M}\left\vert \nabla \mathrm{Ric}\right\vert \left\vert \nabla 
\mathrm{Rm}\right\vert \left\vert \mathrm{Rm}\right\vert ^{p-2}\phi ^{2p} \\
& \phantom{\leq}+c\int_{M}\left\vert \nabla \mathrm{Ric}\right\vert \left\vert \nabla \phi
\right\vert \left\vert \mathrm{Rm}\right\vert ^{p-1}\phi ^{2p-1}\\
& \phantom{\leq}\leq c\int_{M}\left\vert \nabla \mathrm{Ric}\right\vert \left\vert \nabla 
\mathrm{Rm}\right\vert \left\vert \mathrm{Rm}\right\vert ^{p-2}\phi ^{2p} \\
& \phantom{\leq}+c\int_{M}\left\vert \nabla \mathrm{Ric}\right\vert \left\vert \nabla \phi
\right\vert \left\vert \mathrm{Rm}\right\vert ^{p-1}\phi ^{2p-1}.
\end{align*}
Using this estimate in (\ref{f1}) it follows that 
\begin{align}
\begin{split}
\frac{1}{2K}\int_{M}\left\vert \mathrm{Ric}\right\vert ^{2}\partial
_{t}\left\vert \mathrm{Rm}\right\vert ^{p-1}\phi ^{2p}& \leq
c\int_{M}\left\vert \nabla \mathrm{Ric}\right\vert \left\vert \nabla \mathrm{%
	Rm}\right\vert \left\vert \mathrm{Rm}\right\vert ^{p-2}\phi ^{2p} \label{b11}\\
& \phantom{\leq}+c\int_{M}\left\vert \nabla \mathrm{Ric}\right\vert
\left\vert \nabla \phi \right\vert \left\vert \mathrm{Rm}\right\vert
^{p-1}\phi ^{2p-1} \\
& \phantom{\leq}+cK\int_{M}\left\vert \mathrm{Rm}\right\vert ^{p}\phi ^{2p}.
\end{split}%
\end{align}%
Using the inequality 
\begin{align*}
c\int_{M}\left\vert \nabla \mathrm{Ric}\right\vert \left\vert \nabla 
\mathrm{Rm}\right\vert \left\vert \mathrm{Rm}\right\vert ^{p-2}\phi ^{2p}
&\leq\frac{1}{10K}\int_{M}\left\vert \nabla \mathrm{Ric}\right\vert
^{2}\left\vert \mathrm{Rm}\right\vert ^{p-1}\phi ^{2p} \\
&\phantom{\leq}+cK\int_{M}\left\vert \nabla \mathrm{Rm}\right\vert ^{2}\left\vert \mathrm{%
Rm}\right\vert ^{p-3}\phi ^{2p},
\end{align*}%
together with inequality (\ref{b9}), we then obtain
\begin{align}
\begin{split} \label{b12}
\frac{1}{2K}\int_{M}\left\vert \mathrm{Ric}\right\vert ^{2}\partial
_{t}\left\vert \mathrm{Rm}\right\vert ^{p-1}\phi ^{2p}
&\leq \frac{1}{5K}\int_{M}\left\vert \nabla \mathrm{Ric}\right\vert
^{2}\left\vert \mathrm{Rm}\right\vert ^{p-1}\phi ^{2p}  \\
&\phantom{\leq}+cK\int_{M}\left\vert \nabla \mathrm{Rm}\right\vert ^{2}\left\vert \mathrm{%
Rm}\right\vert ^{p-3}\phi ^{2p}  \\
&\phantom{\leq}+cK\int_{M}\left\vert \mathrm{Rm}\right\vert ^{p}\phi ^{2p}  \\
&\phantom{\leq}+cK\int_{M}\left\vert \mathrm{Rm}\right\vert ^{p-1} \left\vert\nabla\phi\right\vert^2
\phi ^{2p-2}.
\end{split}
\end{align}

Substituting (\ref{b12}) into (\ref{b10}) then gives 
\begin{align}
\begin{split}\label{b13}
&\frac{1}{2K}\int_{M}\left\vert \nabla \mathrm{Ric}\right\vert
^{2}\left\vert \mathrm{Rm}\right\vert ^{p-1}\phi ^{2p}\\  
&\qquad\leq cK\int_{M}\left\vert \nabla \mathrm{Rm}\right\vert ^{2}\left\vert 
\mathrm{Rm}\right\vert ^{p-3}\phi ^{2p}-\frac{1}{2K}\frac{d}{dt}%
\int_{M}\left\vert \mathrm{Ric}\right\vert ^{2}\left\vert \mathrm{Rm}%
\right\vert ^{p-1}\phi ^{2p}  \\
&\qquad\phantom{\leq}+cK\int_{M}\left\vert \mathrm{Rm}\right\vert ^{p}\phi ^{2p} 
+cK\int_{M}\left\vert \mathrm{Rm}\right\vert ^{p-1} \left\vert\nabla\phi\right\vert^2
\phi ^{2p-2}. 
\end{split}
\end{align}

This completes our estimate of the first term on the right of (\ref{b7}).  Updated, inequality \eqref{b7} now reads
\begin{align}
\begin{split}\label{b14}
&\frac{d}{dt}\int_{M}\left\vert \mathrm{Rm}\right\vert ^{p}\phi ^{2p}\\
&\qquad\leq -\frac{1}{K}\frac{d}{dt}\int_{M}\left\vert \mathrm{Ric}\right\vert
^{2}\left\vert \mathrm{Rm}\right\vert ^{p-1}\phi ^{2p}+cK\int_{M}\left\vert
\nabla \mathrm{Rm}\right\vert ^{2}\left\vert \mathrm{Rm}\right\vert
^{p-3}\phi ^{2p} \\
&\qquad\phantom{\leq}+cK\int_{M}\left\vert \mathrm{Rm}\right\vert ^{p-1} \left\vert\nabla\phi\right\vert^2\phi ^{2p-2} +cK\int_{M}\left\vert \mathrm{Rm}\right\vert ^{p}\phi^{2p}. 
\end{split}
\end{align}

It remains only to estimate the second term on the right of \eqref{b14}. This may be done more simply. Using \eqref{eq:rmnormev} of Lemma \ref{L}, we find first that 
\begin{align}
\begin{split}\label{b15}
2\int_{M}\left\vert \nabla \mathrm{Rm}\right\vert ^{2}\left\vert \mathrm{Rm}%
\right\vert ^{p-3}\phi ^{2p} &\leq\int_{M}\left( \left( \Delta -\partial
_{t}\right) \left\vert \mathrm{Rm}\right\vert ^{2}\right) \left\vert \mathrm{%
Rm}\right\vert ^{p-3}\phi ^{2p}   \\
&\phantom{\leq}+c\int_{M}\left\vert \mathrm{Rm}\right\vert ^{p}\phi ^{2p}. 
\end{split}
\end{align}
Integrating by parts, and using that $p\geq 3$, we get that
\begin{align*}
\int_{M}\left( \Delta \left\vert \mathrm{Rm}\right\vert ^{2}\right)
\left\vert \mathrm{Rm}\right\vert ^{p-3}\phi ^{2p} &=-\int_{M}\left\langle
\nabla \left\vert \mathrm{Rm}\right\vert ^{2},\nabla \left\vert \mathrm{Rm}%
\right\vert ^{p-3}\right\rangle \phi ^{2p} \\
&\phantom{=}-\int_{M}\left\langle \nabla \left\vert \mathrm{Rm}\right\vert ^{2},\nabla
\phi ^{2p}\right\rangle \left\vert \mathrm{Rm}\right\vert ^{p-3} \\
&\leq c\int_{M}\left\vert \nabla \mathrm{Rm}\right\vert \left\vert \nabla
\phi \right\vert \left\vert \mathrm{Rm}\right\vert ^{p-2}\phi ^{2p-1} \\
&\leq \int_{M}\left\vert \nabla \mathrm{Rm}\right\vert ^{2}\left\vert 
\mathrm{Rm}\right\vert ^{p-3}\phi ^{2p} \\
&\phantom{\leq}+c\int_{M}\left\vert \nabla \phi \right\vert ^{2}\left\vert \mathrm{Rm}%
\right\vert ^{p-1}\phi ^{2p-2},
\end{align*}%
and, furthermore, that 
\begin{equation*}
\int_{M}\left( -\partial _{t}\left\vert \mathrm{Rm}\right\vert ^{2}\right)
\left\vert \mathrm{Rm}\right\vert ^{p-3}\phi ^{2p}\leq -\frac{2}{p-1}%
\frac{d}{dt}\int_{M}\left\vert \mathrm{Rm}\right\vert ^{p-1}\phi
^{2p}+c\int_{M}\left\vert \mathrm{Rm}\right\vert ^{p}\phi ^{2p}.
\end{equation*}
Therefore, it follows from (\ref{b15}) that 
\begin{align}
\begin{split}\label{eq:2ndterm}
\int_{M}\left\vert \nabla \mathrm{Rm}\right\vert ^{2}\left\vert \mathrm{Rm}%
\right\vert ^{p-3}\phi ^{2p} &\leq-\frac{2}{p-1}\frac{d}{dt}%
\int_{M}\left\vert \mathrm{Rm}\right\vert ^{p-1}\phi ^{2p} \\
&\phantom{\leq}+c\int_{M}\left\vert \mathrm{Rm}\right\vert ^{p}\phi ^{2p} \\
&\phantom{\leq}+c\int_{M}\left\vert \mathrm{Rm}\right\vert ^{p-1} \left\vert\nabla\phi\right\vert^2
\phi ^{2p-2}.
\end{split}
\end{align}%
Inserting \eqref{eq:2ndterm} into \eqref{b14}, we conclude that, for any $p\geq 3$, there exist constants $c_1>0$ and $c_2>0$, depending only on $n$ and $p$, such that
\begin{align*}
\begin{split}
&\frac{d}{dt} \int_{M}\left\vert \mathrm{Rm}\right\vert ^{p}\phi
^{2p}+\frac{1}{K}\frac{d}{dt}\int_{M}\left\vert \mathrm{Ric}\right\vert ^{2}\left\vert \mathrm{Rm}%
\right\vert ^{p-1}\phi ^{2p}+c_{1}K\frac{d}{dt}\int_{M}\left\vert \mathrm{Rm}\right\vert
^{p-1}\phi ^{2p}   \\
&\qquad\qquad\leq c_2K\int_{M}\left\vert \mathrm{Rm}\right\vert ^{p}\phi^{2p}+c_2K\int_{M}\left\vert \mathrm{Rm}\right\vert ^{p-1} \left\vert\nabla\phi\right\vert^2
\phi ^{2p-2}.  
\end{split}
\end{align*}
Since 
\begin{align*}
\begin{split}
\frac{1}{K}\frac{d}{dt}\int_{M}\left\vert \mathrm{Ric}\right\vert ^{2}\left\vert \mathrm{Rm}
\right\vert ^{p-1}\phi ^{2p} &= \frac{d}{dt}\left(\frac{1}{K} \int_{M}\left\vert \mathrm{Ric}\right\vert ^{2}\left\vert \mathrm{Rm}
\right\vert ^{p-1}\phi ^{2p}\right)\\
&\phantom{=}+\frac{K'}{K^2}\int_{M}\left\vert \mathrm{Ric}\right\vert ^{2}\left\vert \mathrm{Rm}
\right\vert ^{p-1}\phi ^{2p}
\end{split}
\end{align*}
and 
\begin{equation*}
K\frac{d}{dt}\int_{M}\left\vert \mathrm{Rm}\right\vert
^{p-1}\phi ^{2p}=\frac{d}{dt}\left(K\int_{M}\left\vert \mathrm{Rm}\right\vert
^{p-1}\phi ^{2p}\right)-K'\int_{M}\left\vert \mathrm{Rm}\right\vert
^{p-1}\phi ^{2p},
\end{equation*}
this completes the proof of Proposition \ref{A}.
\subsection{Proof of Proposition \ref{Int}}
Proposition \ref{Int} now follows easily. Our assumptions are that $g(t)$ is a smooth solution
defined on $M \times [0, T]$, and that there exist constants $A$, $K > 0$ and there exists $x_0\in M$ so that the ball $B_{g(0)}(x_0, A/\sqrt{K})$ is compactly
contained in $M$ and we have
\begin{equation*}
	\left\vert \mathrm{Ric}\right\vert\leq K\text{ on }%
	B_{g\left( 0\right) }\left( x_{0},\frac{A}{\sqrt{K}}\right)\times [0,T] .
\end{equation*}%
We wish to prove that, for any $p\geq 3,$ there exists $c=c\left( n,p\right) >0$ so that 
	\begin{gather*}
	\int_{B_{g\left( 0\right) }\left( x_{0},\frac{1}{2}\frac{A}{\sqrt{K}}\right) }\left\vert \mathrm{Rm}\right\vert^{p}(x, \tau)\leq ce^{cKT}\int_{B_{g\left( 0\right) }\left( x_{0},\frac{A}{\sqrt{K}}\right) }\left\vert \mathrm{Rm}\right\vert^{p}(x, 0) \\
+cK^{p}\left( 1+A^{-2p}\right) e^{cKT}\ \mathrm{Vol}_{g\left( \tau\right)
}\left( B_{g\left( 0\right) }\left( x_{0},\frac{A}{\sqrt{K}}\right)
\right),
\end{gather*}
for all $0\leq \tau\leq T$.
Due to the space-time invariance of the Ricci flow, it suffices to prove the proposition for $K=1$. We will assume below, then, that
\begin{equation}
\left\vert\mathrm{Ric}\right\vert(x, t) \leq 1 \quad\mbox{on}\, B_{g\left(0\right)}\left( x_0,A\right)\times [0, T]. \label{b1'}
\end{equation}
It
follows that 
\begin{equation}\label{eq:unifeq}
e^{-2t}g_{ij}\left(x, 0\right) \leq g_{ij}\left(x, t\right) \leq
e^{2t}g_{ij}\left(x, 0\right),
\end{equation}%
for all $0 \leq t \leq T$. 
Consider the cut-off function 
\begin{equation}
\phi \left( x\right) :=\left(\frac{A-d_{g\left( 0\right) }\left(
	x_{0},x\right) }{A}\right)_+  \label{b0}
\end{equation}%
which is Lipschitz with support $\overline{B_{g\left(0\right)}\left( x_{0},A\right)}$.

 Thus, from (\ref{b1'}) we see that $\left\vert \mathrm{Ric}\right\vert \left(x, t\right)\leq 1$ on the support of $\phi$. Furthermore, from \eqref{eq:unifeq} and \eqref{b0}, we see that 
\begin{equation}
\left\vert \nabla \phi \right\vert _{g\left( t\right) }\leq e^{T}\left\vert
\nabla \phi \right\vert _{g\left( 0\right) }\leq A^{-1}e^{T}.  \label{b2}
\end{equation}
Hence, we may apply Proposition \ref{A} to obtain that 
\begin{align}
\begin{split}\label{b16}
&\frac{d}{dt}\left( \int_{M}\left\vert \mathrm{Rm}\right\vert ^{p}\phi
^{2p}+\int_{M}\left\vert \mathrm{Ric}\right\vert ^{2}\left\vert \mathrm{Rm}%
\right\vert ^{p-1}\phi ^{2p}+c_{1}\int_{M}\left\vert \mathrm{Rm}\right\vert
^{p-1}\phi ^{2p}\right)   \\
&\qquad\qquad\leq c_2\int_{M}\left\vert \mathrm{Rm}\right\vert ^{p}\phi^{2p}+c_2\int_{M}\left\vert \mathrm{Rm}\right\vert ^{p-1} \left\vert\nabla\phi\right\vert^2
\phi ^{2p-2}.  
\end{split}
\end{align}
We now estimate the rightmost term in \eqref{b16} using (\ref{b2}) and Young's inequality, obtaining 
\begin{align}\label{b5'}
\begin{split}
\int_{M}\left\vert \mathrm{Rm}\right\vert ^{p-1}\left\vert\nabla\phi\right\vert^2
\phi ^{2p-2}&\leq A^{-2}e^{2T}\int_{M}\left\vert \mathrm{Rm}\right\vert ^{p-1}\phi ^{2p-2}\\
&\leq \int_{M}\left\vert \mathrm{Rm}\right\vert ^{p}\phi ^{2p}  \\
&\phantom{\leq}+cA^{-2p}e^{2Tp}\mathrm{Vol}_{g\left( t\right) }\left( B_{g\left( 0\right)
}\left( x_{0},A\right) \right).
\end{split}
\end{align}%

Consider the function 
\begin{equation*}
U\left( t\right) :=\int_{M}\left\vert \mathrm{Rm}\right\vert ^{p}\phi
^{2p}+\int_{M}\left\vert \mathrm{Ric}\right\vert ^{2}\left\vert \mathrm{Rm}%
\right\vert ^{p-1}\phi ^{2p}+c_{1}\int_{M}\left\vert \mathrm{Rm}\right\vert
^{p-1}\phi ^{2p}.
\end{equation*}%
Since, by \eqref{eq:unifeq}, we have that for all $0\leq t\leq \tau$
\begin{equation}\label{eq:volcomp}
  \mathrm{Vol}_{g(t)}\left(B_{g\left( 0\right)
}\left( x_{0},A\right)\right) \leq e^{cT}\mathrm{Vol}_{g(\tau)}\left(B_{g\left( 0\right)
}\left( x_{0},A\right)\right),
\end{equation}
from \eqref{b16} and \eqref{b5'}, we see that $U$ satisfies the differential inequality
\begin{equation*}
\frac{dU}{dt}\leq cU+cA^{-2p}e^{cT}\mathrm{Vol}_{g\left( \tau\right) }\left(
B_{g\left(0\right) }\left( x_{0},A\right) \right)
\end{equation*}%
on $[0, \tau]$. It follows that 
\begin{equation}
U\left( \tau\right) \leq e^{cT}\left( U\left( 0\right) +cA^{-2p}\mathrm{%
Vol}_{g\left( \tau\right) }\left( B_{g\left( 0\right) }\left(
x_{0},A\right) \right) \right) .  \label{b17}
\end{equation}%
However, again using Young's inequality and \eqref{eq:volcomp}, we have that
\begin{equation*}
U\left( 0\right) \leq c\int_{M}\left\vert \mathrm{Rm}\right\vert^{p}(x, 0)\phi ^{2p}+e^{cT}\mathrm{Vol}_{g\left( \tau\right) }\left( B_{g\left(
0\right) }\left( x_{0},A\right) \right) .
\end{equation*}%
Therefore, there exists a constant $c>0$ so that 
\begin{align*}
\begin{split}
&\int_{M}\left\vert \mathrm{Rm}\right\vert^{p}(x, \tau)\phi
^{2p}\\
&\qquad\leq ce^{cT}\left( \int_{M}\left\vert \mathrm{Rm}\right\vert^{p}(x, 0)\phi ^{2p}+\left( 1+A^{-2p}\right) \mathrm{Vol}_{g\left(
\tau\right) }\left( B_{g\left( 0\right) }\left( x_{0},A\right) \right)
\right),
\end{split}
\end{align*}% 
for all $0\leq \tau\leq T$. This completes the proof of Proposition \ref{Int}. 
\begin{remark}\label{rem:p2}
Our application to Theorem \ref{Rm} only requires the validity of the above estimate for sufficiently large $p$,
and the proof above requires $p\geq 3$. However, an analogous estimate can be proven for $p=2$ 
by a slightly more detailed analysis of the terms in the evolution equation for $\Rm$. 
The idea is to write
\begin{equation*}
  \pdt R_{ijkl} = \nabla_jD_{kli} - \nabla_iD_{klj} - R_{ijpl}R_{pk} - R_{ijkp}R_{pl}.
\end{equation*}
where $D_{ijk} \doteqdot \operatorname{div}(\Rm)_{ijk} = \nabla_lR_{ijkl} = \nabla_iR_{jk} - \nabla_jR_{ik}$,
and use the Tachibana-type identity
\[
 \int_M\langle\nabla_j D_{kli} - \nabla_i D_{klj}, R_{ijkl}\rangle\phi^2 \leq -2\int_M|D|^2\phi^2 + 4\int_M|D||\Rm||\nabla \phi|\phi
\]
to estimate $\frac{d}{dt}\int_M|\Rm|^2\phi^2$ in place of \eqref{b3}.
\end{remark}

\section{Proof of Theorem \protect{\ref{Rm}}}
\label{sec:rmproof}

It is now straightforward to obtain a pointwise estimate from Proposition \ref{Int} using an iteration argument.

\begin{proof}[Proof of Theorem \ref{Rm}]
As before, we may assume $K=1$. Let us fix some $0\leq t\leq T$. Since $\left\vert \mathrm{Ric}\right\vert\leq 1$ on $B_{g\left( 0\right) }\left(
x_{0},A\right)\times [0,T]$, we have from Proposition \ref%
{Int} that 
\begin{align}
\begin{split}
\label{rmp}
	&\int_{B_{g\left( 0\right) }\left( x_{0},\frac{1}{2}A\right)
	}\left\vert \mathrm{Rm}\right\vert^{p}(x, t) \\
	&\phantom{\int}\leq
	ce^{cT}\int_{B_{g\left( 0\right) }\left( x_{0},A\right) }\left\vert 
	\mathrm{Rm}\right\vert^{p}(x, 0) \\
	&\phantom{\int}+c\left( 1+A^{-2p}\right) e^{cT}\ \mathrm{Vol}_{g\left( t\right) }\left(
	B_{g\left( 0\right) }\left( x_{0},A\right) \right) .
\end{split}
\end{align}

By the Bishop-Gromov volume
comparison theorem and (\ref{eq:unifeq}) we get 
\begin{equation*}
\frac{\mathrm{Vol}_{g\left( t\right) }\left( B_{g\left( 0\right) }\left(
	x_{0},A\right) \right) }{\mathrm{Vol}_{g\left( t\right) }\left(
	B_{g\left( 0\right) }\left( x_{0},\frac{1}{2}A\right) \right) }\leq
ce^{cT}\frac{\mathrm{Vol}_{g\left( 0\right) }\left( B_{g\left( 0\right) }\left(
	x_{0},A\right) \right) }{\mathrm{Vol}_{g\left( 0\right) }\left(
	B_{g\left( 0\right) }\left( x_{0},\frac{1}{2}A\right) \right) }\leq
ce^{c(T+A)}.
\end{equation*}
Consequently, it follows from (\ref{eq:volcomp}) and  (\ref{rmp}) that for any $p\geq 3,$%
\begin{equation}
 \left( \fint_{B_{g\left( 0\right) }\left( x_{0},\frac{1}{2}A\right)
}\left\vert \mathrm{Rm}\right\vert^{p}(x, t)\right) ^{\frac{1%
}{p}} \leq ce^{cT+cA}\left( \Lambda _{0}+\left( 1+A^{-2}\right) \right),
\label{d2}
\end{equation}%
where 
\begin{equation*}
\Lambda _{0}:=\sup_{B_{g\left( 0\right) }\left( x_{0},A\right) }\left\vert 
\mathrm{Rm}\right\vert(x, 0).
\end{equation*}%

Now, by \eqref{eq:rmnormev} of Lemma \ref{L}, we have 
\begin{equation}
\left( \Delta -\partial _{t}\right) \left\vert \mathrm{Rm}\right\vert \geq
-\left\vert \mathrm{Rm}\right\vert ^{2}.  \label{d3}
\end{equation}
We proceed to apply De Giorgi-Nash-Moser iteration to (\ref{d3}) using (\ref{d2}) for 
$p>n,$ depending only on the dimension $n$. Note that the Ricci curvature bound and (\ref{eq:unifeq}) imply a uniform bound on the Sobolev constant of $(M,g(t))$ on $
B_{g\left( 0\right) }\left( x_{0},\frac{1}{2}A\right)$. From the argument in \cite%
{Li}, Ch. 19, slightly modified as the metrics evolve in $t$, see e.g. \cite{Ye}, we then obtain 
\begin{equation*}
\left\vert \mathrm{Rm}\right\vert(x_0, T)\leq ce^{cT+cA}\left(
1+\left( T^{-1}+A^{-2}\right) ^{\beta }+\Lambda _{0}^{\alpha }\right) \left(
\Lambda _{0}+\left( 1+A^{-2}\right) \right) ,
\end{equation*}%
for some $\alpha ,\beta $ depending only on $n$. This proves the theorem. 
\end{proof}

\section{On the rate of blow-up of Ricci curvature}
\label{sec:blowup}

Theorem \ref{Rm} implies that if $g(0)$ is complete, with bounded curvature, and the Ricci flow exists on $[0,T)$ but cannot be extended past time $T < \infty$, 
then the Ricci curvature $\mathrm{Ric}$ cannot be uniformly bounded on $[0,T)$. The following theorem strengthens this conclusion.
\begin{theorem}
	\label{Blowup}Let $\left( M^{n},g\left( t\right) \right)$ be a smooth
	solution of the Ricci flow defined on $[0,T)$, so that each $(M,g(t))$ is complete and has bounded curvature. Assume that the Ricci flow cannot be extended past time $T$. Then
	there exists a constant $\varepsilon$, depending only on $n$, and a
	sequence $t_{i}\nearrow T$ so that 
	\begin{equation*}
		\sup_{M}\left\vert \mathrm{Ric}\right\vert \left( x,t_{i}\right) \geq \frac{%
			\varepsilon }{2(T-t_{i})}. 
	\end{equation*}
\end{theorem}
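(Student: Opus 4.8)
The strategy is to argue by contradiction, assuming that no such sequence exists, i.e., that there is some small $\varepsilon > 0$ (to be chosen depending only on $n$) with
\[
\sup_M |\Rc|(x, t) < \frac{\varepsilon}{2(T-t)} \quad \text{for all } t \text{ sufficiently close to } T,
\]
and deriving a uniform bound on $|\Rm|$ up to time $T$, which would contradict the assumption that the flow cannot be extended past $T$ (by Part (b) of Corollary \ref{cor:rcbound}, or rather by the argument underlying Theorem \ref{Rm} applied on large balls). The key point is to exploit the fact that the Ricci bound now \emph{degenerates} only like $(T-t)^{-1}$, so in Proposition \ref{A} we should take $K = K(t) = \varepsilon/(T-t)$, a genuinely time-dependent function, which is why that proposition was stated with a general $C^1$ function $K(t)$ in the first place. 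The extra terms involving $K' = \varepsilon/(T-t)^2$ then have a definite sign or can be absorbed, and the factor $e^{cKT}$ from Proposition \ref{Int} must be re-examined: with $K$ not constant, $\int_0^t K(s)\, ds \sim \varepsilon \log\frac{1}{T-t}$, so $e^{c\int K} \sim (T-t)^{-c\varepsilon}$, which is a \emph{mild} (small power) blow-up precisely because $\varepsilon$ is small.

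First I would redo the energy estimate of Proposition \ref{A} with $K(t) = \varepsilon/(T-t)$ and with the cutoff $\phi$ now supported on a fixed ball $B_{g(0)}(x_0, r)$ (using completeness and bounded curvature of each slice), tracking the $K'$ terms carefully. Then I would integrate the resulting differential inequality for the quantity $U(t)$ as in the proof of Proposition \ref{Int}, obtaining an $L^p$ bound of the schematic form
\[
\int_{B_{g(0)}(x_0, r/2)} |\Rm|^p(x, t) \;\leq\; C(n,p)\,(T-t)^{-c\varepsilon}\left( \int_{B_{g(0)}(x_0, r)} |\Rm|^p(x, 0) + \text{(volume/lower-order terms involving } K(t)^p)\right).
\]
The lower-order terms will carry a factor $K(t)^p = \varepsilon^p (T-t)^{-p}$, which looks bad, but this is where the iteration must be done more carefully: rather than taking $t$ all the way to $T$ in one step, one runs the argument on a sequence of time intervals $[t_k, t_{k+1}]$ with $t_k \nearrow T$ chosen so that $T - t_{k+1} \sim \lambda (T - t_k)$ for a fixed $\lambda \in (0,1)$, using the $L^p$ bound at time $t_k$ as initial data for the estimate on $[t_k, t_{k+1}]$, and a parabolic (De Giorgi–Nash–Moser) iteration on each such interval to convert $L^p$ control into pointwise control. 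On the $k$-th interval the natural scale-invariant quantity is $(T-t)|\Rm|$, and one shows that if $\sup (T-t_k)|\Rm|(\cdot, t_k) =: M_k$ is not too large, then $M_{k+1} \leq \theta M_k + C$ for some $\theta < 1$ provided $\varepsilon$ is small enough; iterating gives a uniform bound on $M_k$, hence $|\Rm|(\cdot, t) \leq C/(T-t)$ near $T$, and in particular $|\Rm|$ is bounded on compact sets uniformly, contradicting non-extendability.

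The main obstacle will be making the interval-by-interval iteration close cleanly: one must verify that the constant $\theta < 1$ genuinely emerges — i.e., that the contribution of the "old" curvature $\int_{t_k}$ to the estimate at $t_{k+1}$ comes with a factor that beats the mild $(T-t)^{-c\varepsilon}$ growth and the $e^{cT}$-type constants, which forces the choice $\lambda$ (ratio of consecutive interval lengths) and $\varepsilon$ to be tuned together, both depending only on $n$. A secondary technical point is handling the Sobolev constant uniformly on the fixed balls across all the shrinking time intervals near $T$: here the hypothesis that each $(M, g(t))$ is complete with bounded curvature, together with the uniform equivalence \eqref{eq:unifeq}-type bounds (which now read $e^{-2\int_s^t K} g(s) \leq g(t) \leq e^{2\int_s^t K} g(s)$ and again only distort by a small power of $T-t$), gives a uniform lower bound on volumes of balls and hence a uniform Sobolev inequality, so the Moser iteration constants stay under control. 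Once these two points are handled, the contradiction — and hence the stated lower bound $\sup_M|\Rc|(\cdot, t_i) \geq \varepsilon/(2(T-t_i))$ along a subsequence — follows.
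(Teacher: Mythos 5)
Your opening moves match the paper's: argue by contradiction assuming $\sup_M|\Rc|(\cdot,t) < \varepsilon/(2(T-t))$, set $K(t)=\varepsilon/(T-t)$ in Proposition~\ref{A}, note that the Gronwall factor $\exp(c\int K)\sim (T-\tau)^{-c\varepsilon}$ is mild for small $\varepsilon$, and track the metric distortion via $e^{\pm 2\int K}$. However, there is a genuine gap at the end, and the proposed dyadic-in-time iteration introduces a complication the paper avoids.

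The critical flaw is in how you propose to close the contradiction. Your scheme, if it worked, would yield $\sup_M|\Rm|(\cdot,t)\leq C/(T-t)$, and you then assert this gives a uniform bound on compact sets contradicting non-extendability. It does not: $C/(T-t)$ blows up at $T$, so this is not a uniform bound and cannot be fed into an extension argument. Worse, $|\Rm|\leq C/(T-t)$ is entirely consistent with a finite-time singularity — indeed the parabolic maximum principle applied to $|\Rm|^2$ (Lemma 8.7 of \cite{ChowLuNi}) forces $\sup_M|\Rm|(\cdot,t)\geq 1/(8(T-t))$ at any singular time $T$. This lower bound is in fact the missing ingredient: the paper's contradiction is obtained not by extending the flow, but by proving that under the hypothesis $|\Rc|<\varepsilon/(2(T-t))$ one gets the \emph{strictly better} rate $\sup_M|\Rm|\leq C/(T-t)^{1/2}$ (more precisely $C/(T-t)^{c_3(\varepsilon+1/p)}$, then choosing $p$ large and $\varepsilon$ small), which is incompatible with the $1/(8(T-t))$ lower bound. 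You never invoke this lower bound, and the rate $1/(T-t)$ that your scheme targets is exactly borderline and so does not produce a contradiction.

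A secondary point: the interval-by-interval iteration with ratio $\lambda$ and the contraction $M_{k+1}\leq\theta M_k + C$ is not needed and adds real difficulties (one would need to beat, rather than merely match, the additive constant to push $(T-t)|\Rm|$ below the threshold $1/8$). The paper instead integrates the differential inequality for $U(t)$ once from $t_0$ to $\tau$ using the integrating factor $(T-t)$; after absorbing $c_2\varepsilon<1$ and requiring $(p+n/2)\varepsilon<1$, this yields $U(\tau)\leq C/(T-\tau)$, hence $\bigl(\fint|\Rm|^p\bigr)^{1/p}\leq C(T-t)^{-1/p}$, and a single Moser iteration (done once, carefully tracking the $(T-t)^{-c\varepsilon}$ distortion of the Sobolev constant) gives the pointwise bound $|\Rm|(x_0,t)\leq C(T-t)^{-c_3(\varepsilon+1/p)}$. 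Tuning $p=p(n)$ and $\varepsilon=\varepsilon(n,p)$ so that the exponent is at most $1/2$ completes the argument against the maximum-principle lower bound. To repair your proof, you should replace the appeal to non-extendability with the explicit lower bound $\sup_M|\Rm|\geq 1/(8(T-t))$, and aim for a pointwise rate strictly slower than $(T-t)^{-1}$ rather than the scale-invariant rate itself.
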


\begin{proof}[Proof of Theorem \ref{Blowup}] 

We argue by contradiction. 
Fix some $t_{0}\in \left[ 0,T\right) $ and assume that 
	\begin{equation}
		\sup_{M}\left\vert \mathrm{Ric}\right\vert \left( x,t\right) <
		\frac{\varepsilon }{2(T-t)},  \label{m1}
	\end{equation}%
	for all $t\in [t_{0},T)$.	
	We will show that this leads to a contradiction if $\varepsilon = \varepsilon(n)$ is chosen
	sufficiently small.
By hypothesis we know that there
	exists a constant $C$ so that 
	\begin{equation}
		\sup_{M}\left\vert \mathrm{Rm}\right\vert
		\left( x,t_0\right) \leq C.  \label{m2}
	\end{equation}%
	Throughout the proof, we will denote by $c$ a constant depending only on the
	dimension $n$, and by $C$ a constant depending only on $T,$ $t_{0}$ and $%
	\sup_{M}\left\vert \mathrm{Rm}\right\vert
	\left( x,t_0\right) $.  We will also take $p = p(n)$ to be a suitably large constant
	to be specified below in terms of the dimension. Ultimately, we will first specify the value of $p(n)$
	and then set the value of $\varepsilon(n) = \varepsilon(n, p)$ accordingly.

Fix some $x_{0}\in M$ and choose the cut-off function
\begin{equation}
	\label{phi_1}
	\phi \left( x\right) :=\left( 1-d_{g\left(t_0\right) }\left( x_{0},x\right)\right)_{+},
\end{equation}	
with support $\overline{B_{g\left( t_0\right) }\left( x_{0},1\right)}$.
	 Now fix $\tau \in \left[ t_{0},T\right)$.  
	 Applying
	Proposition \ref{A} on $\left[ t_{0},\tau \right]$, with $\Omega = B_{g(t_0)}(x_0, 2)$, it follows that there exist 
	$c_{1}= c_1(n, p)$ and $c_{2} = c_2(n, p)$, so that 
	\begin{align}
	\begin{split}
	\label{m3}
	&\frac{d}{dt}\left( \int_{M}\left\vert \mathrm{Rm}\right\vert ^{p}\phi
	^{2p}+\frac{1}{K}\int_{M}\left\vert \mathrm{Ric}\right\vert ^{2}\left\vert 
	\mathrm{Rm}\right\vert ^{p-1}\phi ^{2p}+c_{1}K\int_{M}\left\vert \mathrm{Rm}%
	\right\vert ^{p-1}\phi ^{2p}\right)    \\
	&\phantom{\frac{d}{dt}}\leq c_{2}K\int_{M}\left\vert \mathrm{Rm}\right\vert ^{p}\phi
	^{2p}+c_{2}K\int_{M}\left\vert \nabla \phi \right\vert ^{2p}   \\
	&\phantom{\frac{d}{dt}}-\frac{K^{\prime }}{K^{2}}\int_{M}\left\vert \mathrm{Ric}\right\vert
	^{2}\left\vert \mathrm{Rm}\right\vert ^{p-1}\phi ^{2p}+c_{1}K^{\prime}\int_{M}\left\vert \mathrm{Rm}\right\vert ^{p-1}\phi ^{2p},  
	\end{split}
	\end{align}
	for all $t\in \left[ t_{0},\tau \right]$. We choose 
	\begin{equation}
		K\left( t\right) :=\frac{\varepsilon }{T-t},  \label{m4}
	\end{equation}%
	and we assume that $\varepsilon =\varepsilon \left(n, p\right) $ is small
	enough so that $c_{2}\varepsilon <1$. 
	
	It follows from (\ref{m3}) and (\ref{m4}) that the function 
	\begin{equation*}
		U\left( t\right) :=\int_{M}\left\vert \mathrm{Rm}\right\vert ^{p}\phi ^{2p}+%
		\frac{1}{K}\int_{M}\left\vert \mathrm{Ric}\right\vert ^{2}\left\vert \mathrm{%
			Rm}\right\vert ^{p-1}\phi ^{2p}+c_{1}K\int_{M}\left\vert \mathrm{Rm}%
		\right\vert ^{p-1}\phi ^{2p}
	\end{equation*}%
	satisfies 
	\begin{equation}
		\frac{dU}{dt}\leq \frac{1}{T-t}U+cK\int_{M}\left\vert \nabla \phi
		\right\vert ^{2p}.  \label{m5}
	\end{equation}%
	Let us note that (\ref{m1}) and (\ref{m2}) imply%
	\begin{equation}
		\frac{1}{C}\left( T-t\right) ^{\varepsilon }g_{ij}\left( t_0\right) \leq
		g_{ij}\left( t\right) \leq \frac{C}{\left( T-t\right) ^{\varepsilon }}%
		g_{ij}\left( t_0\right) .  \label{m6}
	\end{equation}%
	By (\ref{m6}) and the definition of $\phi$ in (\ref{phi_1}), we then have that%
	\begin{equation*}
		\left\vert \nabla \phi \right\vert^2 \leq \frac{C}{\left( T-t\right)
			^{\varepsilon }}
	\end{equation*}%
	and 
	\begin{equation*}
		\mathrm{Vol}_{g\left( t\right) }\left( B_{g\left(t_0\right) }\left(
		x_{0},1\right) \right) \leq \frac{C}{\left( T-t\right) ^{\frac{n\varepsilon}{2}}}%
		\mathrm{Vol}_{g\left( t_0\right) }\left( B_{g\left( t_0\right) }\left(
		x_{0},1\right) \right) .
	\end{equation*}%
	Consequently, we deduce from (\ref{m5}) that 
	\begin{align}
	\begin{split}\label{m7}
		\frac{dU}{dt}&\leq \frac{1}{T-t}U+\frac{C}{\left( T-t\right) ^{\left(
				p+\frac{n}{2}\right) \varepsilon +1}}\mathrm{Vol}_{g\left( t_0\right) }\left(
		B_{g\left( t_0\right) }\left( x_{0},1\right) \right) 
	  \end{split}
		\end{align}%
	for all $t\in \left[ t_{0},\tau \right] $. Integrating (\ref{m7}) from $%
	t=t_{0}$ to $t=\tau$, it follows that 
	\begin{equation*}
		U\left( \tau \right) \leq \frac{C}{T-\tau}\mathrm{Vol}_{g\left(t_0\right)
		}\left( B_{g\left( t_0\right) }\left( x_{0},1\right) \right) ,
 \end{equation*}%
	by choosing $\varepsilon =\varepsilon \left(n, p\right) $ small enough in (\ref%
	{m7}). Using the Bishop-Gromov volume comparison theorem for the metric $g(t_0)$, this proves that 
	\begin{equation}
	\label{Lp_1}
		\left(\fint_{B_{g\left(t_0\right) }\left( x_{0},\frac{1}{2}\right) }\left\vert 
		\mathrm{Rm}\right\vert ^{p}\left( x,t\right) \right)^\frac{1}{p}\leq \frac{C}{(T-t)^\frac{1}{p}},
	\end{equation}%
	for all $t\in \left[ t_{0},T\right) $. 
	
	Now, we have a Sobolev
	inequality for $g\left(t_0\right) $ on the ball $B_{g\left(t_0\right) }\left( x_{0},%
	\frac{1}{2}\right)$ of the form 
	\begin{equation*}
		\left( \fint_{M}\phi ^{2\mu }dv_{t_0}\right) ^{\frac{1}{\mu }}\leq
		C_{S}\fint_{M}\left\vert \nabla \phi \right\vert _{g\left(t_0\right)
		}^{2}dv_{t_0},
	\end{equation*}%
	for some $\mu = \mu(n) \leq n/(n-2)$,
	valid for any $\phi$ supported in $B_{g\left(t_0\right) }\left( x_{0},\frac{1}{2%
	}\right) $. We may assume $C_S \geq 1$. Here $dv_{t}$ denotes the volume form of $M$ with
	respect to the metric $g\left( t\right) $. Since $C_{S}$ only depends on the lower bound of the Ricci
	curvature of $g\left( t_0\right) $ on $B_{g\left( t_0\right) }\left(
	x_{0},\frac{1}{2}\right) $, we have $C_{S}\leq C$ according to our convention. Using (\ref{m6}), we get a
	Sobolev inequality for $g\left( t\right) $ on $B_{g\left(t_0\right) }\left(
	x_{0},\frac{1}{2}\right)$ as well, of the form%
	\begin{equation*}
		\left( \fint_{M}\phi ^{2\mu }dv_{t}\right) ^{\frac{1}{\mu }}\leq \frac{C}{%
			\left( T-t\right) ^{c\varepsilon }}\fint_{M}\left\vert \nabla \phi
		\right\vert _{g\left( t\right) }^{2}dv_{t}.
	\end{equation*}%
	
	We now claim that there exists $c_3>0$, depending only on $n$, so that
	\begin{equation}
		\left\vert \mathrm{Rm}\right\vert \left( x_{0},t\right) \leq \frac{C}{\left(
			T-t\right) ^{c_3(\varepsilon + \frac{1}{p})}},  \label{m8}
	\end{equation}%
	for all $t\in [(T+t_0)/2,T)$. Indeed, we may argue as in the proof of Theorem \ref{Rm}, using DeGiorgi-Nash-Moser
	iteration. We will do this in somewhat greater detail than before since it is now crucial to our argument
	that we carefully track the various dependencies of the constants. We will closely follow the argument in Chapter 19 of \cite{Li}.
	
	Using that 
	\begin{equation}
	\label{eqn}
		\left( \partial _{t}-\Delta \right) u\leq cfu,
	\end{equation}%
	where $f=u=\left\vert \mathrm{Rm}\right\vert ,$ it follows that 
	\begin{equation}
		-\int_{M}u^{2a-1}\left( \Delta u\right) \varphi
		^{2}+\int_{M}u^{2a-1}u_{t}\varphi ^{2}\leq c\int_{M}u^{2a}f\varphi ^{2},
		\label{m9}
	\end{equation}%
	for any nonnegative Lipschitz function $\varphi$, compactly supported in $B_{g\left( t_0\right)
	}\left( x_{0},\frac{1}{2}\right) $. Here and below, the integration is taken with
	respect to the measure $dv_{t}$, and we assume $a\ge 1$. 
	
	Integrating by parts, it follows that 
	\begin{equation}
		-\int_{M}u^{2a-1}\left( \Delta u\right) \varphi ^{2}=2\int_{M}\left\langle
		\nabla u,\nabla \varphi \right\rangle u^{2a-1}\varphi
		+(2a-1)\int_{M}\left\vert \nabla u\right\vert ^{2}u^{2a-2}\varphi ^{2}.
		\label{m10}
	\end{equation}%
	Observe that (\ref{m9}) and (\ref{m10}) imply 
	\begin{equation*}
		\frac{1}{2}\frac{d}{dt}\left( \int_{M}u^{2a}\varphi ^{2}\right)
		+\int_{M}\left\vert \nabla \left( u^{a}\varphi \right) \right\vert ^{2}\leq
		ca\int_{M}u^{2a}f\varphi ^{2}+\int_{M}\left\vert \nabla \varphi \right\vert
		^{2}u^{2a}.
	\end{equation*}%
	For fixed $t_{0}<s<s+v<T$, we multiply the above inequality with the Lipschitz function 
	\begin{equation*}
		\psi \left( t\right) :=\left\{ 
		\begin{array}{c}
			0 \\ 
			\frac{t-s}{v} \\ 
			1%
		\end{array}%
		\right. 
		\begin{array}{l}
			\text{for }t_{0}\leq t\leq s \\ 
			\text{for }s<t\leq s+v \\ 
			\text{for }s+v<t\leq T%
		\end{array}%
	\end{equation*}%
	and obtain 
	\begin{align}
	\begin{split}
	\label{m11}
		&\frac{1}{2}\frac{d}{dt}\left( \psi ^{2}\int_{M}u^{2a}\varphi ^{2}\right)
		+\psi ^{2}\int_{M}\left\vert \nabla \left( u^{a}\varphi \right) \right\vert
		^{2} \\
		&\leq ca\psi ^{2}\int_{M}u^{2a}f\varphi ^{2}+\psi ^{2}\int_{M}\left\vert
		\nabla \varphi \right\vert ^{2}u^{2a}+\psi \psi ^{\prime
		}\int_{M}u^{2a}\varphi ^{2}.  
	\end{split}
	\end{align}%
	Integrating (\ref{m11}) from $t=t_{0}$ to $t=\tau$, where $\tau \in \left(
	t_{0},T\right) $ is fixed, then implies 
	\begin{align}
	\begin{split}
	\label{m12}
		&\frac{1}{2}\psi ^{2}\left( \tau \right) \int_{M}u^{2a}\varphi
		^{2}+\int_{t_{0}}^{\tau }\psi ^{2}\int_{M}\left\vert \nabla \left(
		u^{a}\varphi \right) \right\vert ^{2} \\
		&\leq ca\int_{t_{0}}^{\tau }\psi
		^{2}\int_{M}u^{2a}f\varphi ^{2}+\int_{t_{0}}^{\tau }\psi
		^{2}\int_{M}\left\vert \nabla \varphi \right\vert ^{2}u^{2a}  \\
		&+\int_{t_{0}}^{\tau }\psi \psi ^{\prime }\int_{M}u^{2a}\varphi ^{2}. 
	\end{split}
	\end{align}%
	
	The integrals above are taken with respect to the measure $dv_t$ associated to $g\left(
	t\right)$.  Now, using (\ref{m6}), we will rewrite the inequality (\ref{m12}) in terms of the measure $dv_{t_0}$
	and the metric $g(t_0)$. This can be done at the expense of decreasing the constant on the left hand side
	by an amount proportional to an appropriate power of $T-\tau$.  So transformed, the inequality reads
	\begin{align}
	\begin{split}
	\label{m13}
		&\frac{1}{C}\left( T-\tau\right) ^{c_{0}\varepsilon }\left( \frac{1}{2}\psi
		^{2}\left( \tau \right) \int_{M}u^{2a}\varphi ^{2}dv_{t_0}+\int_{t_{0}}^{\tau
		}\psi ^{2}\int_{M}\left\vert \nabla \left( u^{a}\varphi \right) \right\vert
		_{g\left(t_0\right) }^{2}dv_{t_0}\right)  \\
		&\quad\leq ca\int_{t_{0}}^{\tau }\psi ^{2}\int_{M}u^{2a}f\varphi ^{2}dv_{t_0}+\int_{t_{0}}^{\tau }\psi ^{2}\int_{M}\left\vert \nabla \varphi
		\right\vert _{g\left(t_0\right) }^{2}u^{2a}dv_{t_0}  \\
		&\quad\phantom{\leq}+\int_{t_{0}}^{\tau }\psi
		\psi ^{\prime }\int_{M}u^{2a}\varphi ^{2}dv_{t_0},  
		\end{split}
	\end{align}%
	for some constant $c_{0}$ depending only on $n$. 
	
	Since \eqref{m13} holds relative to the fixed manifold $\left( M,g\left(t_0\right) \right)$,
	we can now apply the above Sobolev inequality and follow the remainder of the argument for the DeGiorgi-Nash-Moser iteration process in 
	Chapter 19 of \cite{Li} to conclude that, if $p > \mu/(\mu-1)$, we have
\begin{equation*}
\left\vert \mathrm{Rm}\right\vert \left( x_{0},t\right) \leq 
	  \frac{CQ(t)}{(T-t)^{c\varepsilon}}\left(1 + Q^{\frac{p(\mu-1)}{\mu(p-1)-p}}\left(t\right) + \frac{1}{t-t_0}\right)
	  ^{\frac{2\mu-1}{p(\mu-1)}},
\end{equation*}
for some constant $c>0$ depending only on $n$, where
\[
  Q(t) := \sup_{\tau\in [t_0, t]}\left(\fint_{B_{g\left(t_0\right)
			}\left( x_{0},\frac{1}{2}\right) }\left\vert \mathrm{Rm}\right\vert
		^{p}\left( x,\tau\right) \right)^{\frac{1}{p}}.
\]
Consequently, if $p > \mu/(\mu-1)+ 1$ and $t \geq (T+t_0)/2$, it follows that 
         \begin{align}
	\begin{split}\label{eq:moser}
	  \left\vert \mathrm{Rm}\right\vert \left( x_{0},t\right)\leq \frac{CQ(t)}{(T-t)^{c\varepsilon}}\left(1 + Q^p(t)\right)
	  ^{\frac{2\mu-1}{p(\mu-1)}}.
	\end{split}
\end{align}
Hence,  using \eqref{Lp_1}, we have that
	\begin{align*}
		\left\vert \mathrm{Rm}\right\vert \left( x_{0},t\right)  
		&\leq \frac{C}{\left( T-t\right) ^{c_3(\varepsilon +\frac{1}{p})}}
	\end{align*}%
	for some $c_3 = c_3(n)$ and all $t\in [(T+t_0)/2, T)$ as claimed.
	
	Thus, if we now choose, say, $p = p(n) > \max\{\mu/(\mu-1) + 1, 4c_3\}$
	and then choose $\varepsilon = \varepsilon(n, p)$ so that $\varepsilon < 1/(4c_3)$ (in addition to the restrictions we have already imposed
	above), it follows that we have
	\begin{equation}
	\sup_{M}\left\vert \mathrm{Rm}\right\vert \left( x,t\right) \leq \frac{C}{%
			\left( T-t\right) ^{\frac{1}{2}}}  \label{m14}
	\end{equation}
	for all $t\in [(T+t_0)/2, T)$.

	On the other hand, each $(M,g(t))$ is complete and has bounded curvature, so we may apply the parabolic maximum principle
	to the equation
	\[
	  \partial_t|\Rm|^2 \leq \Delta |\Rm|^2 + 16|\Rm|^3
	\]
	and use that $T$ is a singular time for the solution
	to deduce that 
		\begin{equation*}
		\sup_{M}\left\vert \mathrm{Rm}\right\vert \left( x,t\right) \geq \frac{1}{%
			8\left( T-t\right) },
	\end{equation*}%
	as in Lemma 8.7 of \cite{ChowLuNi}.  This contradicts (\ref{m14}) and proves the theorem.
\end{proof}

\end{document}